\documentclass{amsart}
\usepackage{amsmath}
\setlength{\textheight}{43pc}
\setlength{\textwidth}{28pc}

\usepackage{url}

\usepackage{amssymb,latexsym}
\usepackage[dvips]{graphicx}

\theoremstyle{plain}

\newtheorem{theorem}{Theorem} [section]
\newtheorem{lemma}{Lemma} [section]
\newtheorem{proposition}{Proposition} [section]
\newtheorem{corollary}{Corollary} [section]
\newtheorem{definition}{Definition} [section]

\newtheorem{remark}{Remark}[section]

\begin{document}

\date{} 

\title[Domain deformations and eigenvalues of the Laplacian]
{Domain deformations and eigenvalues of the Dirichlet Laplacian in a Riemannian manifold}
\author{Ahmad El Soufi and Sa\"{\i}d Ilias}

\address{Universit\'e de Tours, Laboratoire de Math\'ematiques
et Physique Th\'eorique, UMR-CNRS 6083, Parc de Grandmont, 37200
Tours, France} 
\email{elsoufi@univ-tours.fr, ilias@univ-tours.fr}
\keywords{Eigenvalues, Laplacian, Dirichlet problem, Domain deformation, Heat trace} 
\subjclass[2000]{49R50, 35P99, 58J50, 58J32}

\begin{abstract}
For any bounded regular domain $\Omega$ of a real analytic Riemannian manifold
$M$, we denote by $\lambda_{k}(\Omega)$ the $k$-th eigenvalue of the Dirichlet Laplacian
of $\Omega$. In this paper, we consider $\lambda_k$ and as a
functional upon the set of domains of fixed volume in $M$. We
introduce and investigate a natural notion of critical domain for
this functional. In particular, we obtain necessary and
sufficient conditions for a domain to be critical, locally
minimizing or locally maximizing for $\lambda_k$. These
results rely on Hadamard type variational formulae that we
establish in this general setting.

As an application, we obtain a characterization of critical
domains of the trace of the heat kernel under Dirichlet 
boundary conditions.

\end{abstract}

\maketitle

\section {Introduction}

Isoperimetric eigenvalue problems constitute one of the main
topics in spectral geometry and shape optimization. Given a
Riemannian manifold $M$, a natural integer $k$ and a positive
constant $V$, the problem is to optimize the $k$-th eigenvalue of
the Dirichlet Laplacian, considered as a functional
upon the set of all bounded domains of volume $V$ of $M$.

The first result in this subject is the famous Faber-Krahn Theorem
\cite{F, Kr}, originally conjectured by Rayleigh, stating that
Euclidean balls minimize the first eigenvalue of the Dirichlet
Laplacian among all domains of given volume. Extensions of
this classical result to higher order eigenvalues,
combinations of eigenvalues as well as domains of other Riemannian
manifolds or subjected to other types of constraints, have been
obtained during the last decades and a very rich literature is
devoted to this subject (see for instance \cite{A, AB, B, BBGa,
BM, C, E, EI1, H, N, OPS, R1, Sh, Sz, W} and the references therein).

A fundamental tool in the proof of many results concerning the
first Dirichlet eigenvalue is the following variation formula,
known as Hadamard's formula \cite{Ha, GaSc, Sc1, Sc2}:
$${\frac{d}{d\varepsilon}} \lambda_1(\Omega_\varepsilon)\big|_{\varepsilon=0}=-
\int_{\partial\Omega_0} v \left({\frac{\partial \phi}{ \partial
\nu}}\right)^2 d\sigma,$$ where $\lambda_1(\Omega_\varepsilon)$
stands for the first Dirichlet eigenvalue of the domain
$\Omega_\varepsilon$, $\frac{\partial \phi}{\partial \nu}$ denotes
the normal derivative of the first normalized eigenfunction $\phi$
of the Dirichlet Laplacian on $\Omega_0$ and $v$ is the normal
displacement of the boundary induced by the deformation. This
formula shows that a necessary and sufficient condition for a
domain $\Omega \subset {\mathbb R}^n$ to be critical for the
Dirichlet first eigenvalue functional under fixed volume
variations, is that its first Dirichlet eigenfunctions are
solutions of the following overdetermined problem:

$$\left\{
\begin{array}{l}
\Delta {\phi} = \lambda_1(\Omega) {\phi}\ \hbox{in}\ \Omega \\
\\
{\phi}=0 \ \ \hbox{on} \ \partial\Omega\\
\\
|{\partial \phi\over \partial\nu}| =c \ \ \hbox{on}\
\partial\Omega,
\end{array}
\right.$$ for some constant $c$. Since a first Dirichlet
eigenfunction does not change sign in $\Omega$, it follows from
the well known symmetry result of Serrin \cite{S} that $\phi$ is
radial and $\Omega$ is a round ball. Therefore, Euclidean balls
are the only critical domains of the Dirichlet first eigenvalue
functional under fixed volume deformations.

Notice that Hadamard's formula remains valid for any higher order
eigenvalue $\lambda_k$ as far as $\lambda_k(\Omega)$ is simple.
Nevertheless, when $\lambda_k(\Omega)$ is degenerate, a
differentiability problem arises and our first aim in this paper
(see Section 3) is to introduce, in spite of this
non-differentiability problem, a natural and simple notion of
critical domain.

Indeed, using perturbation theory of unbounded self-adjoint
operators in Hilbert spaces, we will see that, for any deformation
$\Omega_\varepsilon$, analytic in $\varepsilon$, of a domain
$\Omega$ of a real analytic Riemannian manifold $M$, and any natural integer
$k$, the function $\varepsilon\mapsto \lambda_k(\Omega_\varepsilon
)$ admits a left sided and a right sided derivatives at
$\varepsilon =0$. Of course, when $\Omega$ is a local extremum of $\lambda_k$, these derivatives have opposite signs. This suggests us
to define critical domains of $\lambda_k$ to be
the domains $\Omega$ such that, for any analytic volume-preserving
deformation $\Omega_\varepsilon$ of $\Omega$, the right sided and
the left sided derivatives of $\lambda_{k}(\Omega_\varepsilon)$
at $\varepsilon = 0$ have opposite signs. That is,

$${d \over d\varepsilon}\lambda_{k}(\Omega_\varepsilon){\big|_{\varepsilon=0^+}}
 \times\ {d \over d\varepsilon}\lambda_{k}(\Omega_\varepsilon){\big|_{\varepsilon=0^-}}
\leq 0.$$
which means that $\lambda_{k} (\Omega_\varepsilon)\leq
\lambda_{k} (\Omega)+ o({\varepsilon})$ or
$\lambda_{k}(\Omega_\varepsilon) \geq
\lambda_{k}(\Omega_\varepsilon) + o({\varepsilon}) $ as
$\varepsilon\to 0$.

\medskip

After giving, in Section 2, a general Hadamard type variation
formula, we derive, in Section 3, necessary and sufficient
conditions for a domain $\Omega$ of the Riemannian manifold $M$ to
be critical for the $k$-th Dirichlet eigenvalue functional under volume-preserving domain deformations.

For instance, we show that (Theorem \ref{lambda_k}) if $\Omega$ is
a critical domain of the $k$-th Dirichlet eigenvalue under volume-preserving domain deformations, then there exists a family of
eigenfunctions $\phi_1,\ldots, \phi_m$ satisfying the following
system:

\begin{equation}\label{1}
\left\{
\begin{array}{l}
\Delta {\phi_i} = \lambda_k (\Omega)\, {\phi_i} \ \hbox {in} \ \Omega, \, \ \forall i \leq m, \\
\\
{\phi_i}=0 \ \hbox {on} \ \partial \Omega,\ \forall i \leq m,\\
\\
\sum_{i=1}^{m} \left({\partial \phi_i\over \partial \nu}\right)^2 =1 \ \hbox {on} \ \partial \Omega.
\end{array}
\right.
\end{equation}
Moreover, this necessary condition is also sufficient when either
$\lambda_k (\Omega)> \lambda_{k-1}(\Omega)$ or $\lambda_k (\Omega)< \lambda_{k+1}(\Omega)$,
which means that $\lambda_k(\Omega)$ corresponds to the first one or the last one
in a cluster of equal eigenvalues.
On the other hand, we prove that if $\lambda_k (\Omega)>
\lambda_{k-1}(\Omega)$ (resp. $\lambda_k (\Omega)<
\lambda_{k+1}(\Omega)$) and if $\Omega\subset M$ is a local
minimizer (resp. maximizer) of the $k$-th Dirichlet eigenvalue
functional under volume-preserving domain deformations, then
$\lambda_k (\Omega)$ is simple and the absolute value of the
normal derivative of its corresponding eigenfunction is constant
along the boundary $\partial \Omega$ (see Theorem \ref{lambda_k
min}).

The last section deals with the trace of the heat kernel under
Dirichlet boundary conditions defined for a domain $\Omega \subset M$ by
$$Y_{\Omega}(t) = \int_\Omega H(t,x,x) v_g=\sum_{k\ge 1} e^{- \lambda_k(\Omega) t}, $$
where $H$ is the fundamental
solution of the heat equation in $\Omega$ under Dirichlet boundary conditions. Indeed, Luttinger \cite{L} proved an isoperimetric Faber-Krahn
like result for $Y(t)$ considered as a
functional upon the set of bounded Euclidean domains, that is, for any bounded domain $\Omega\subset
\mathbb{R}^n$ and any $t>0$, one has $Y_{\Omega}(t)\le
Y_{\Omega^*}(t)$, where $\Omega^*$ is an Euclidean ball whose
volume is equal to that of $\Omega$.

For any smooth deformation $\Omega_\varepsilon$ of $\Omega$, the corresponding
heat trace function $Y_\varepsilon (t) $ is always
differentiable w.r.t. $\varepsilon$ and the domain $\Omega$ will be said critical
for the trace of the heat kernel under the Dirichlet boundary
condition at time $t$ if, for any volume-preserving deformation
$\Omega_\varepsilon$ of $\Omega$, we have
$${d\over {d \varepsilon }} Y_\varepsilon (t)\big| _{\varepsilon=0} =0$$
After giving the first variation formula for this functional
(Theorem \ref{varnoy}), we show that a necessary and sufficient
condition for a domain $\Omega$ to be critical for the trace of
the heat kernel under Dirichlet boundary
condition at time $t$ is that the Laplacian of the function
$x\mapsto H(t,x,x) $ must be constant
along the boundary $\partial\Omega$ (Corollary \ref{criticH}).

Using Minakshisundaram-Pleijel asymptotic expansion of $Y(t)$, one can derive necessary conditions for a domain to be
critical for the trace of the heat kernel under Dirichlet boundary condition at any time $t>0$. For instance, we
show that the boundary of such a domain necessarily has constant
mean curvature (Theorem \ref{hconst}).

 Thanks to Alexandrov type results (see \cite{Al, MR}), one deduces that
 when the ambient space $M$ is Euclidean,
 Hyperbolic or a standard hemisphere, then
 geodesic balls are the only critical domains of the trace of the heat kernel
 under Dirichlet boundary condition at any time $t>0$ (Corollary \ref{balls}).


\section {Hadamard type variation formulae}

Let $\Omega$ be a regular bounded domain of a Riemannian oriented manifold
$(M,g)$. We will denote by $\bar g$ the metric induced by $g$ on the boundary
$\partial \Omega$ of $\Omega$.
Let us start with the following general formula.

\begin{proposition}\label{varmetric} Let $(g_\varepsilon)$ be a differentiable variation
of the metric $g$. Let $\phi_\varepsilon\subset {\mathcal
C}^\infty (\Omega)$ be a differentiable family of functions and
$\Lambda_\varepsilon$ a differentiable family of real numbers such
that, $\forall \varepsilon$,
$||\phi_\varepsilon||_{L^2{(\Omega,g_\varepsilon)}}=1$ and
$$\left\{
\begin{array}{l}
\Delta_{g_\varepsilon} \phi_\varepsilon = {\Lambda}_\varepsilon \phi_\varepsilon \;\; \hbox{in} \; \Omega\\
\\
 \phi_\varepsilon =0 \;\; \hbox{on} \; \partial \Omega.
\end{array}
\right.$$
Then,
\begin{eqnarray}
\nonumber{} {d\over{d\varepsilon}}{\Lambda}_\varepsilon\big|_{\varepsilon=0}
&=& \int_\Omega \phi_0\Delta'\phi_0 v_g \\
\nonumber {} &=& -\int_\Omega \langle d\phi_0 \otimes d\phi_0 +{1\over 4}\Delta \phi_0^2 \ g , h\rangle v_g,
\end{eqnarray}
where
$h:={d\over{d\varepsilon}}g_\varepsilon\big|_{\varepsilon=0}$,
$\Delta':={d\over{d\varepsilon}}\Delta_{g_\varepsilon}\big|_{\varepsilon=0}$
and $\langle,\rangle$ is the inner product induced by $g$ on the
space of covariant tensors.
\end{proposition}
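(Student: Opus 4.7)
The plan is to carry out two steps: first, reduce $\Lambda'$ to a single integral involving $\Delta'\phi_0$; second, rewrite that integral geometrically in terms of the metric variation $h$.

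For the first step, I would differentiate the eigenvalue equation $\Delta_{g_\varepsilon}\phi_\varepsilon = \Lambda_\varepsilon \phi_\varepsilon$ at $\varepsilon = 0$, which produces
$$\Delta'\phi_0 + \Delta \phi_0' = \Lambda'\phi_0 + \Lambda_0 \phi_0',$$
where $\phi_0' := \frac{d}{d\varepsilon}\phi_\varepsilon|_{\varepsilon=0}$. Since the domain $\Omega$ is fixed while only the metric is perturbed, the identity $\phi_\varepsilon|_{\partial\Omega}=0$ forces $\phi_0'|_{\partial\Omega}=0$, so both $\phi_0$ and $\phi_0'$ lie in $H_0^1(\Omega)$. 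Multiplying the relation by $\phi_0$, integrating against $v_g$, and invoking the self-adjointness of the Dirichlet Laplacian gives
$$\int_\Omega \phi_0\,\Delta \phi_0'\,v_g = \int_\Omega \phi_0'\,\Delta\phi_0\,v_g = \Lambda_0\int_\Omega \phi_0\phi_0'\,v_g,$$
so the cross terms cancel, and the normalization $\|\phi_0\|_{L^2(\Omega,g)}=1$ yields the first identity $\Lambda' = \int_\Omega \phi_0\,\Delta'\phi_0\,v_g$.

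For the second step, rather than compute $\Delta'$ in coordinates, I would pass through the Dirichlet energy. Set $Q_\varepsilon(\phi_0) := \int_\Omega |d\phi_0|_{g_\varepsilon}^2\,v_{g_\varepsilon}$. Because $\phi_0 \in H_0^1(\Omega)$, one also has $Q_\varepsilon(\phi_0) = \int_\Omega \phi_0\,\Delta_{g_\varepsilon}\phi_0\, v_{g_\varepsilon}$ for every $\varepsilon$; differentiating the two expressions at $\varepsilon=0$ and equating them isolates $\int_\Omega \phi_0\Delta'\phi_0\, v_g$. The needed metric derivatives are the classical ones: $\frac{d}{d\varepsilon}g_\varepsilon^{ij}|_{0} = -h^{ij}$ and $\frac{d}{d\varepsilon}v_{g_\varepsilon}|_{0} = \frac{1}{2}\mathrm{tr}_g(h)\,v_g$. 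A short computation then yields
$$\int_\Omega \phi_0\Delta'\phi_0\,v_g = -\int_\Omega \langle d\phi_0\otimes d\phi_0,\,h\rangle\,v_g + \frac{1}{2}\int_\Omega \bigl(|d\phi_0|^2 - \Lambda_0\phi_0^2\bigr)\,\mathrm{tr}_g(h)\,v_g.$$

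To conclude, I would apply the pointwise identity $\Delta(\phi_0^2) = 2\phi_0\Delta\phi_0 - 2|d\phi_0|^2 = 2\Lambda_0\phi_0^2 - 2|d\phi_0|^2$, so that $|d\phi_0|^2 - \Lambda_0\phi_0^2 = -\frac{1}{2}\Delta(\phi_0^2)$, together with $\mathrm{tr}_g(h) = \langle g,h\rangle$. Substituting collapses the last integral into $-\int_\Omega \bigl\langle \tfrac{1}{4}\Delta(\phi_0^2)\,g,\,h\bigr\rangle\,v_g$ and produces the stated formula. The main bookkeeping obstacle will be maintaining the sign conventions consistently (the geometric convention $\Delta = -\mathrm{div}\,\mathrm{grad}$, $\delta g^{ij} = -h^{ij}$, and the factor $\tfrac{1}{2}$ in $\delta v_g$), and verifying that the two $\tfrac{1}{2}$'s combine with the identity for $\Delta(\phi_0^2)$ to produce precisely the $\tfrac{1}{4}$ appearing in the final expression.
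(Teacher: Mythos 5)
Your proof is correct. The first half is identical to the paper's: differentiate the eigenvalue equation at $\varepsilon = 0$, multiply by $\phi_0$, integrate, and kill the cross terms by self-adjointness on $H_0^1$ (the paper spells out the boundary terms from integration by parts, while you invoke self-adjointness directly, but the substance is the same). The second half is where you genuinely diverge. The paper quotes the explicit first-order formula for the Laplacian under a metric perturbation, $\Delta'\phi=\langle D^2\phi,h\rangle - \langle d\phi,\delta h+\tfrac12 d\tilde{h}\rangle$ (citing Berger), and then integrates the two resulting terms by parts to massage them into $\langle d\phi\otimes d\phi + \tfrac14\Delta\phi^2\,g,\, h\rangle$. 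You instead differentiate the two expressions for the Dirichlet energy $Q_\varepsilon(\phi_0) = \int_\Omega |d\phi_0|^2_{g_\varepsilon} v_{g_\varepsilon} = \int_\Omega \phi_0\,\Delta_{g_\varepsilon}\phi_0\,v_{g_\varepsilon}$, which isolates $\int_\Omega\phi_0\Delta'\phi_0\,v_g$ using only the elementary variations $\dot g^{ij}=-h^{ij}$ and $\dot v_g=\tfrac12\operatorname{tr}_g(h)v_g$, and then close with the pointwise identity $\Delta(\phi_0^2)=2\Lambda_0\phi_0^2-2|d\phi_0|^2$. This buys you a more self-contained argument — you never need the formula for $\Delta'$ or the extra integrations by parts, only the quadratic-form characterization of the Laplacian on $H_0^1$ — at the cost of the intermediate expression $\int_\Omega\phi_0\Delta'\phi_0\,v_g$ playing a slightly more passive role. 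Both routes land on the same formula, and your sign-convention bookkeeping (geometric Laplacian, hence $\Delta(\phi_0^2)=2\phi_0\Delta\phi_0-2|d\phi_0|^2$ and $Q=\int\phi\,\Delta\phi$ with a plus sign on $H_0^1$) checks out.
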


\begin{proof}
For simplicity, let us introduce the following notations:
$\lambda:=\Lambda_0$, $\phi:=\phi_0$, $\phi':=
{d\over{d\varepsilon}}\phi_\varepsilon\big|_{\varepsilon=0}$ and
$\Lambda':={d\over{d\varepsilon}}\Lambda_\varepsilon\big|_{\varepsilon=0}$.

Differentiating the two sides of the equality
$\Delta_{g_\varepsilon} \phi_\varepsilon = {\Lambda}_\varepsilon
\phi_\varepsilon$ we obtain
$$\Delta'\phi + \Delta \phi'=\Lambda'\phi+\Lambda\phi'.$$
After multiplication by $\phi$ and integration we get
$$\int_\Omega \phi \Delta'\phi v_g+\int_\Omega \phi \Delta\phi' v_g
= \Lambda'+ \lambda \int_\Omega \phi\phi' v_g.$$
Integration by parts gives
$$\int_\Omega \phi \Delta\phi' v_g= \lambda \int_\Omega \phi\phi' v_g
+ \int_{\partial \Omega} ({\partial \phi\over \partial\nu} \phi'
-{\partial \phi'\over \partial\nu} \phi) v_{\bar g}.$$
Thus,
$$
\Lambda'=\int_\Omega \phi \Delta'\phi v_g
+ \int_{\partial \Omega} (\phi'{\partial \phi\over \partial\nu}
-\phi{\partial \phi'\over \partial\nu} ) v_{\bar g}.
$$

It is clear that the boundary integral in this last equation
vanishes (since $\phi_\varepsilon=0$ on $\partial\Omega$).\\
In conclusion, we have

\begin{equation}\label{3}
 \Lambda'= \int_\Omega \phi \Delta'\phi v_g.
\end{equation}

Now, the expression of $\Delta'$ is given by (see \cite{B})
\begin{equation}\label{4}
\Delta'\phi=\langle D^2\phi,h\rangle - \langle d\phi,\delta
h+{1\over2}d\tilde{h}\rangle,
\end{equation}
where $\tilde{h}$ is the trace of $h$ w.r.t. $g$ (that is $\tilde{h}=\langle g,h\rangle$).
Integration by parts yields

\begin{eqnarray}\label{5}
 \;\;\; \int_\Omega \phi \langle d\phi,\delta h\rangle v_g &=& {1\over 2}\int_\Omega \langle D^2\phi^2,h\rangle v_g \\
 \nonumber {} &=& \int_\Omega \langle d\phi\otimes d\phi + \phi D^2\phi,h\rangle v_g
\end{eqnarray}

and

\begin{eqnarray}\label{6}
\int_\Omega \phi \langle d\phi,d\tilde h\rangle v_g
&=&{1\over 2} \int_\Omega \tilde h \Delta \phi^2 v_g 
\end{eqnarray}
Combining (\ref{3}), (\ref{4}), (\ref{5}) and (\ref{6}) we obtain
$$\Lambda' = -\int_\Omega \langle d\phi\otimes d\phi + {1\over4}\Delta\phi^2 g ,h\rangle v_g$$
which completes the proof of the proposition.
\end{proof}

In the particular case of domain deformations, Proposition
\ref{varmetric} gives rise to the following variation
formulae.

\begin{corollary}\label{vardir} Let $\Omega_\varepsilon = f_\varepsilon
(\Omega)$ be a deformation of $\Omega$. Let $\phi_\varepsilon \in
{\mathcal C}^\infty (\Omega_\varepsilon)$ and
${\Lambda}_\varepsilon \in {\bf R}$ be two differentiable curves
such that, $\forall \varepsilon$, $||\phi_\varepsilon||_{L^2(\Omega_{\varepsilon},g)}=1$ 
and
$$\left\{
\begin{array}{l}
\Delta \phi_\varepsilon = {\Lambda}_\varepsilon \phi_\varepsilon \;\; \hbox{in\;} \Omega_\varepsilon\\
\\
 \phi_\varepsilon =0 \;\; \hbox{on} \; \partial \Omega_\varepsilon.
\end{array}
\right.$$
Then,
$${d\over d\varepsilon}\Lambda{_\varepsilon}\big|_{\varepsilon=0}=-
\int_{\partial\Omega} v \left({\partial \phi \over \partial
\nu}\right)^2 v_{\bar g},$$ where $\phi=\phi_0$ and
${v=g\left({d\over
d\varepsilon}f_\varepsilon\big|_{\varepsilon=0},\nu\right)}$ is
the normal component of the variation vector field of the
deformation $\Omega_\varepsilon$.
\end{corollary}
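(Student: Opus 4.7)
The plan is to reduce Corollary~\ref{vardir} to Proposition~\ref{varmetric} by pulling back all data to the fixed domain $\Omega$, then performing an integration by parts on the resulting bulk formula.

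First, I would set $g_\varepsilon := f_\varepsilon^{*} g$ on $\Omega$ and $\tilde\phi_\varepsilon := \phi_\varepsilon\circ f_\varepsilon$. The map $f_\varepsilon\colon(\Omega,g_\varepsilon)\to(\Omega_\varepsilon,g)$ is an isometry, so $\Delta_{g_\varepsilon}\tilde\phi_\varepsilon=\Lambda_\varepsilon\tilde\phi_\varepsilon$ in $\Omega$, $\tilde\phi_\varepsilon=0$ on $\partial\Omega$, and $\|\tilde\phi_\varepsilon\|_{L^{2}(\Omega,g_\varepsilon)}=\|\phi_\varepsilon\|_{L^{2}(\Omega_\varepsilon,g)}=1$. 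Thus the hypotheses of Proposition~\ref{varmetric} are satisfied on the fixed domain $\Omega$. Writing $X:={d\over d\varepsilon}f_\varepsilon\big|_{\varepsilon=0}$, the infinitesimal deformation of the metric is $h={d\over d\varepsilon}f_\varepsilon^{*}g\big|_{\varepsilon=0}={\mathcal L}_{X}g$. Proposition~\ref{varmetric} therefore yields
$$
\Lambda'=-\int_\Omega\langle T,{\mathcal L}_{X}g\rangle\,v_g,\qquad T:=d\phi\otimes d\phi+\tfrac{1}{4}\Delta\phi^{2}\,g.
$$

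Next, I would use the standard identity, valid for any symmetric $2$-tensor $T$ and vector field $X$ on $\Omega$,
$$
\int_\Omega\langle T,{\mathcal L}_{X}g\rangle\,v_g=-2\int_\Omega\langle\delta T,X\rangle\,v_g+2\int_{\partial\Omega}T(X,\nu)\,v_{\bar g},
$$
obtained by integrating $T^{ij}\nabla_i X_j$ by parts and using the symmetry of $T$. The first key step is to show that the interior term vanishes. Using $\Delta\phi=\lambda\phi$, a direct computation gives $\delta(d\phi\otimes d\phi)=\Delta\phi\cdot d\phi-\tfrac{1}{2}d|d\phi|^{2}$, while $\delta(f g)=-df$ combined with $\tfrac{1}{4}\Delta\phi^{2}=\tfrac{1}{2}\phi\Delta\phi-\tfrac{1}{2}|d\phi|^{2}$ produces $\delta(\tfrac{1}{4}\Delta\phi^{2}g)=-\lambda\phi\,d\phi+\tfrac{1}{2}d|d\phi|^{2}$. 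The two cancel, so $\delta T=0$.

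The remaining step is the evaluation of the boundary integrand. Since $\phi$ vanishes on $\partial\Omega$, the gradient $\nabla\phi$ is normal there, so $d\phi=\tfrac{\partial\phi}{\partial\nu}\,\nu^{\flat}$ on $\partial\Omega$. Hence $(d\phi\otimes d\phi)(X,\nu)=\left(\tfrac{\partial\phi}{\partial\nu}\right)^{2}v$, where $v=g(X,\nu)$. Similarly, the tangential derivatives of $\phi$ vanish along $\partial\Omega$, which (together with the eigenvalue equation, which kills the $\phi\Delta\phi$ term) gives $\Delta\phi^{2}=-2\left(\tfrac{\partial\phi}{\partial\nu}\right)^{2}$ on $\partial\Omega$. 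Therefore
$$
T(X,\nu)=\left(\tfrac{\partial\phi}{\partial\nu}\right)^{2}v-\tfrac{1}{2}\left(\tfrac{\partial\phi}{\partial\nu}\right)^{2}v=\tfrac{1}{2}\left(\tfrac{\partial\phi}{\partial\nu}\right)^{2}v,
$$
and combining everything yields $\Lambda'=-\int_{\partial\Omega}v\left(\tfrac{\partial\phi}{\partial\nu}\right)^{2}v_{\bar g}$, as claimed. The main obstacle is the careful bookkeeping of sign conventions in the identity $\delta T=0$ and in the restriction $\Delta\phi^{2}|_{\partial\Omega}=-2(\partial\phi/\partial\nu)^{2}$; everything else is manipulation. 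Note that only the normal component $v$ of $X$ survives, reflecting the geometric fact that tangential boundary deformations do not change $\Omega$ to first order.
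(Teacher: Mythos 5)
Your proof is correct and reaches the paper's conclusion, but the key computational step is organized quite differently. The paper, after the same pullback reduction via $g_\varepsilon=f_\varepsilon^*g$, expands $\mathcal L_V g$ in terms of $\nabla V$ and carries out two explicit term-by-term integrations by parts (one for $\langle d\phi\otimes d\phi,\mathcal L_Vg\rangle$, one for $\tfrac14\Delta\phi^2\langle g,\mathcal L_Vg\rangle$), keeping track of several interior and boundary contributions that are then recombined. You instead isolate the tensor $T=d\phi\otimes d\phi+\tfrac14\Delta\phi^2\,g$, invoke the general identity relating $\int_\Omega\langle T,\mathcal L_Xg\rangle$ to $\langle\delta T,X\rangle$ plus a boundary term, and observe that $\delta T=0$ whenever $\Delta\phi=\lambda\phi$. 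This is conceptually cleaner: it exhibits $T$ as a divergence-free ``stress-energy'' tensor of the Helmholtz equation (a Pohozaev/Rellich-type conservation law), so the interior terms drop out for a structural reason rather than by cancellation of explicitly computed pieces, and the whole variation is a pure boundary integral of $T(X,\nu)$. Both approaches buy the same formula; yours makes the mechanism more transparent and shorter, while the paper's is more elementary and self-contained.

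One small bookkeeping issue: the sign in your stated identity $\int_\Omega\langle T,\mathcal L_Xg\rangle v_g=-2\int_\Omega\langle\delta T,X\rangle v_g+2\int_{\partial\Omega}T(X,\nu)v_{\bar g}$ is inconsistent with the sign convention for $\delta$ that you implicitly use to compute $\delta(d\phi\otimes d\phi)=\Delta\phi\,d\phi-\tfrac12 d|d\phi|^2$ and $\delta(fg)=-df$ (these correspond to $\delta T=-\nabla^iT_{ij}$, for which the bulk term should carry a $+2$). Since $\delta T=0$ anyway, the final answer is unaffected, but you should fix the two to agree. The boundary evaluation — $d\phi=\frac{\partial\phi}{\partial\nu}\nu^\flat$ and $\Delta\phi^2\big|_{\partial\Omega}=-2\left(\frac{\partial\phi}{\partial\nu}\right)^2$ using $\phi=0$ on $\partial\Omega$ together with the geometer's convention $\Delta=-\mathrm{div}\,\nabla$ — is correct.
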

\begin{proof}
Let us apply Proposition \ref{varmetric} with
$g_\varepsilon=f_\varepsilon^*g$ and $\bar
\phi_\varepsilon=\phi_\varepsilon\circ f_\varepsilon$. Indeed, one
can easily check that $\|\bar
\phi_\varepsilon\|_{L^2(\Omega,g_\varepsilon)}=1$,
$\Delta_{g_\varepsilon}\bar
\phi_\varepsilon=\Lambda_\varepsilon\bar \phi_\varepsilon$ in
$\Omega$ and $\bar \phi_\varepsilon=0$ on $\partial\Omega$. Hence,
\begin{equation}\label{7}
{d\over d\varepsilon} \Lambda{_\varepsilon}\big|_{\varepsilon=0}=-
\int_{\Omega} \langle d\phi \otimes d\phi +{1\over 4}\Delta \phi^2
\ g , h\rangle v_g
\end{equation}
with $\phi:=\phi_0=\bar\phi_0$ and $h={d\over
d\varepsilon}f_\varepsilon^*g\big|_{\varepsilon=0}={\mathcal L}_V
g$, where ${\mathcal L}_V g$ is the Lie derivative of $g$ w.r.t.
the vector field $V={d\over
d\varepsilon}f_\varepsilon\big|_{\varepsilon=0}$.

Using the expression of ${\mathcal L}_V g$ in terms of the
covariant derivative $\nabla V$ of $V$ and integrating by parts,
we obtain
\begin{eqnarray}
 \nonumber {} &\displaystyle{}\int_\Omega &\langle d\phi \otimes d\phi , {\mathcal L}_V g \rangle v_g =\int_\Omega {\mathcal L}_V g ( \nabla\phi,\nabla\phi) v_g =2\int_\Omega \langle \nabla_{\nabla\phi}V,\nabla\phi\rangle v_g \\
 \nonumber {} &=& \int_\Omega \hbox{div} (\langle V,\nabla\phi \rangle \nabla\phi) v_g + 2\int_\Omega \langle V,\nabla\phi \rangle \Delta\phi v_g -2\int_\Omega D^2\phi (V,\nabla\phi) v_g\\
\nonumber {} &=&2\int_{\partial\Omega} \langle V,\nabla\phi \rangle{\partial \phi \over \partial \nu}\ v_{\bar g}+\lambda \int_\Omega \langle V,\nabla\phi^2 \rangle v_g -2\int_\Omega D^2\phi (V,\nabla\phi) v_g,
\end{eqnarray}
with $\lambda:= \Lambda_0$, and
\begin{eqnarray}
 \nonumber {}{1\over4} \int_\Omega \Delta\phi^2 \langle g,{\mathcal L}_V g \rangle v_g
 & =& {1\over2}\int_\Omega \Delta\phi^2 \hbox{div} V v_g \\
 \nonumber {} &=&\lambda \int_\Omega \phi^2 \hbox{div} V v_g - \int_\Omega |\nabla \phi|^2\hbox{div} V v_g \\
\nonumber {} &=& \int_\Omega (- \lambda \langle V,\nabla\phi^2 \rangle +2 D^2\phi (V,\nabla\phi)) v_g\\
\nonumber {} &+&\int_{\partial\Omega} (\lambda\phi^2-|\nabla \phi|^2) \langle V,\nu \rangle v_{\bar g}.
\end{eqnarray}
Replacing in (\ref{7}), we get
$$ {d\over d\varepsilon} \Lambda{_\varepsilon}{\big|_{\varepsilon=0}}=
\int_{\partial\Omega}\{- 2 \langle V,\nabla \phi \rangle {\partial \phi \over \partial \nu}
+ \langle V,\nu \rangle |\nabla\phi|^2
 - \lambda \langle V,\nu \rangle \phi^2\} v_{\bar g}.$$
Since $\phi$ is identically zero on the boundary, we have at any
point of $\partial \Omega$, $\nabla\phi={\partial \phi \over
\partial \nu} \nu$. In particular, $
|\nabla\phi|^2=\left({\partial \phi \over \partial \nu}\right)^2$
and $ \langle V,\nabla\phi \rangle= \langle V,\nu \rangle
{\partial \phi \over \partial \nu}=v{\partial \phi \over \partial
\nu}$. Thus,
$${d\over d\varepsilon}\ \Lambda{_\varepsilon}{\big|_{\varepsilon=0}}=-
\int_{\partial\Omega} v \left({\partial \phi \over \partial \nu}\right)^2 v_{\bar g}.$$

\end{proof}

\bigskip

\section{Critical domains}

Throughout this section, the ambient Riemannian manifold $(M,g)$ is assumed to be real analytic.
\subsection{Preliminary results and definitions}
Let $\Omega$ be a regular bounded domain of a Riemannian manifold
$(M,g)$. An analytic deformation $(\Omega_\varepsilon)$ of
$\Omega$ is given by an analytic 1-parameter family of
diffeomorphisms $f_\varepsilon: \Omega \rightarrow
\Omega_\varepsilon$ such that $f_\varepsilon (\partial \Omega)=
\partial \Omega_\varepsilon$ and $f_0 = Id$. Such a deformation is
called volume-preserving if the Riemannian volume of
$\Omega_\varepsilon$ w.r.t the metric $g$ does not depend on
$\varepsilon$.

The spectrum of the Laplace operator $\Delta_g$ under the
Dirichlet boundary condition will be denoted
$$Sp_{_D}\ (\Delta_g ,\Omega_\varepsilon)= \{\ \lambda_{1,\varepsilon}\
< \lambda_{2,\varepsilon} \ \leq \cdots \leq\ \lambda_{k,\varepsilon}\
\ \uparrow +\ \infty\ \}$$
The functions $\varepsilon \mapsto \lambda_{k,\varepsilon}$ is continuous but not
differentiable in general, excepting $\lambda_{1,\varepsilon}$
which is always differentiable since it is simple. However, as we
will see hereafter, the general perturbation theory of unbounded
self-adjoint operators enables us to show that the function
$\lambda_{k,\varepsilon}$ admits a
right sided and a left sided derivatives at $\varepsilon = 0$. In
all the sequel, a family of functions $\phi_\varepsilon \in
{\mathcal C}^\infty (\Omega_\varepsilon)$ will be said
differentiable (resp. analytic) w.r.t $\varepsilon$, if that is
the case for $\phi_\varepsilon\circ f_\varepsilon \in {\mathcal
C}^\infty (\Omega)$.

\begin{lemma}\label{perturbation} Let $\lambda \in Sp_{_D}(\Delta_g , \Omega)$
be an eigenvalue of multiplicity $p$ of the Dirichlet Laplacian in
$\Omega$. For any analytic deformation $\Omega_\varepsilon$ of
$\Omega$, there exist $p$ families $(\Lambda_{i, \varepsilon})_{i
\leq p}$ of
real numbers and $p$ families $(\phi_{i, \varepsilon})_{i \leq p}
\subset {\mathcal C}^\infty (\Omega_\varepsilon)$ of functions, depending analytically on
$\varepsilon$ and satisfying, $\forall\varepsilon \in
(-\varepsilon_0 , \varepsilon_0)$ and $\forall i \in
\{1,\ldots,p\}$,

\begin{enumerate}

\item[(a)] $\Lambda_{i,0}= \lambda$
\item[(b)] the family $\{\phi_{1,\varepsilon},
\cdots, \phi_{p,\varepsilon}\}$ is orthonormal in
$L^2(\Omega_\varepsilon,g)$ \item[(c)] $\left\{ \begin{array}{l}
\Delta\ \phi_{i,\varepsilon}\ =\Lambda_{i,\varepsilon} \phi_{i,\varepsilon} \ {\mbox in} \ \Omega_\varepsilon \\
\\
\phi_{i,\varepsilon}=0 \ {\mbox on} \ {\partial\Omega_\varepsilon}
\end{array}\right.$
\end{enumerate}
\end{lemma}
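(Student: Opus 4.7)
The plan is to reduce the problem to Rellich--Kato analytic perturbation theory for an analytic family of self-adjoint operators with compact resolvent acting on a \emph{fixed} Hilbert space. The obstruction to a direct application is that the operators $\Delta_g$ on $\Omega_\varepsilon$ live on different $L^2$-spaces as $\varepsilon$ varies; my first step is therefore to pull everything back to $\Omega$ via $f_\varepsilon$. Setting $g_\varepsilon := f_\varepsilon^{*}g$ and $\bar\phi_\varepsilon := \phi_\varepsilon \circ f_\varepsilon$, as in the proof of Corollary \ref{vardir}, the eigenvalue problem on $\Omega_\varepsilon$ becomes
$$\Delta_{g_\varepsilon}\bar\phi_\varepsilon = \Lambda_\varepsilon\bar\phi_\varepsilon \text{ in } \Omega, \qquad \bar\phi_\varepsilon = 0 \text{ on } \partial\Omega,$$
with $\|\bar\phi_\varepsilon\|_{L^2(\Omega,g_\varepsilon)}=1$. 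The analyticity of $f_\varepsilon$ ensures that $\varepsilon \mapsto g_\varepsilon$ is analytic as a family of Riemannian metrics on $\Omega$.

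Next I would eliminate the $\varepsilon$-dependence of the measure. Writing $v_{g_\varepsilon} = \rho_\varepsilon\, v_g$ with $\rho_\varepsilon>0$ analytic in $\varepsilon$, the map $U_\varepsilon u := \rho_\varepsilon^{1/2} u$ is a unitary isomorphism from $L^2(\Omega,g_\varepsilon)$ onto the fixed Hilbert space $\mathcal{H} := L^2(\Omega,g)$, depending analytically on $\varepsilon$. Then
$$A_\varepsilon := U_\varepsilon\, \Delta_{g_\varepsilon}\, U_\varepsilon^{-1}$$
is an analytic family of unbounded self-adjoint operators on $\mathcal{H}$, all with common domain $\mathcal{D} := H^2(\Omega)\cap H^1_0(\Omega)$ (the Dirichlet boundary condition is preserved since $U_\varepsilon$ is just a positive multiplication). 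Checking that $A_\varepsilon$ is of Kato class (A) amounts to verifying that for any $u \in \mathcal{D}$ the map $\varepsilon \mapsto A_\varepsilon u \in \mathcal{H}$ is analytic; this follows from the explicit local expression of $\Delta_{g_\varepsilon}$ in terms of the analytically varying coefficients of $g_\varepsilon$, together with analyticity of $\rho_\varepsilon$.

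Once this framework is set up, the Rellich--Kato theorem (see Kato, \emph{Perturbation Theory for Linear Operators}, Chap.~VII) applies: since each $A_\varepsilon$ has compact resolvent and purely discrete real spectrum, and the family is analytic of type (A), the eigenvalues of $A_\varepsilon$ can be labelled as analytic functions of $\varepsilon$, and the corresponding eigenprojections $P_\varepsilon$ onto the sum of eigenspaces bifurcating from the $p$-dimensional eigenspace of $\lambda$ at $\varepsilon = 0$ are analytic. Choosing any orthonormal basis of $\ker(A_0 - \lambda)$ and propagating it by the analytic Kato transformation function associated with $P_\varepsilon$ produces $p$ analytic orthonormal families $(\tilde\phi_{i,\varepsilon})$ in $\mathcal{H}$ and $p$ analytic real-valued functions $\Lambda_{i,\varepsilon}$ with $\Lambda_{i,0} = \lambda$ and $A_\varepsilon \tilde\phi_{i,\varepsilon} = \Lambda_{i,\varepsilon}\tilde\phi_{i,\varepsilon}$. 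Pulling back, $\phi_{i,\varepsilon} := (U_\varepsilon^{-1}\tilde\phi_{i,\varepsilon})\circ f_\varepsilon^{-1}$ gives the required analytic eigenfunctions on $\Omega_\varepsilon$, orthonormal in $L^2(\Omega_\varepsilon,g)$ and satisfying the Dirichlet system.

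The main technical point is the verification that $A_\varepsilon$ is genuinely analytic of type (A) on the $\varepsilon$-independent domain $\mathcal{D}$; elliptic regularity for the uniformly elliptic family $\Delta_{g_\varepsilon}$ with Dirichlet data, valid because $(M,g)$ and $f_\varepsilon$ are real analytic and $\Omega$ has regular boundary, is what guarantees this common-domain property. All remaining ingredients, including the smoothness of the $\phi_{i,\varepsilon}$ required by the statement, follow from standard elliptic regularity since $\Lambda_{i,\varepsilon}\phi_{i,\varepsilon}$ is smooth up to $\partial\Omega_\varepsilon$.
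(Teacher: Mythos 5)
Your proposal is correct and takes essentially the same route as the paper: pull back to the fixed domain $\Omega$ via $f_\varepsilon$, conjugate by the analytic multiplication operator $U_\varepsilon$ (coming from the density of $v_{g_\varepsilon}$ relative to $v_g$) to obtain an analytic family of self-adjoint operators with common domain on a fixed Hilbert space, and then invoke Rellich--Kato. The only difference is that you spell out the verification that the family is of Kato type (A) with domain $H^2(\Omega)\cap H^1_0(\Omega)$, which the paper leaves implicit; this is a reasonable amount of added detail, not a change of method.
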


 The proof is based on perturbation theory of
unbounded self-adjoint operators in Hilbert spaces. Results
concerning the differentiability of eigenvalues and eigenvectors have been first
obtained by Rellich \cite{Re} and then by Kato \cite{K} in the
analytic case. Many results were also obtained under weaker
differentiability conditions (see for instance  \cite{KM0, KM} for recent contributions in this subject). Nevertheless, even a smooth curve $\varepsilon\mapsto P_\varepsilon$ of self-adjoint operators may lead to noncontinuous eigenvectors w.r.t. $\varepsilon$ (see Rellich's example \cite[chap II, Example 5.3]{K}). Since we need to differentiate eigenvectors w.r.t. $\varepsilon$, we imposed analyticity assumptions in order to get analytic curves of operators.   

\begin{proof}[Proof of Lemma \ref{perturbation}] In order to get into the framework
of perturbation theory, we first need to modify our operators so
that they all have the same domain. Indeed, for any $\varepsilon$
we set $g_\varepsilon = f_\varepsilon^* g$ and denote $\Delta
_\varepsilon$ the Laplace operator of $(\Omega,g_\varepsilon)$.
Clearly, we have
$$Sp_{_D} (\Delta_g, \Omega_\varepsilon) = Sp_{_D} (\Delta_\varepsilon,\Omega).$$
Notice that since $f_\varepsilon$ depends analytically on $\varepsilon$ and that $g$ is real analytic, the curves $\varepsilon\mapsto g_\varepsilon$ and, hence, $\varepsilon\mapsto \Delta_\varepsilon$ are analytic w.r.t. $\varepsilon$. 

The operator $\Delta_\varepsilon$ is symmetric w.r.t. the inner product in
$L^2 (\Omega, g_\varepsilon)$, but not necessarily w.r.t. the
inner product in $L^2 (\Omega,g)$. Therefore, we need to introduce a
conjugation as follows. Let $U_\varepsilon: L^2(\Omega,g)
\rightarrow L^2(\Omega,g_\varepsilon)$ be the unitary isomorphism
given by
$$U_\varepsilon: v \mapsto \left({|g|\over |g_\varepsilon|}\right)^{1\over 4} v,$$
where $|g| = \det (g_{ij})$ is the determinant of the matrix $(g_{ij})$
of the components of $g$ in a local coordinate system.
We define the operator $P_\varepsilon$ to be
$$P_\varepsilon = U_\varepsilon^{-1} \circ \Delta_\varepsilon\circ U_\varepsilon.$$
Therefore, we have $Sp_{_D} (P_\varepsilon,\Omega)= Sp_{_D} (\Delta_\varepsilon, \Omega)$
and, if $v_\varepsilon \in {\mathcal C}^\infty (\Omega)$ is an eigenfunction of
$P_\varepsilon$, then $\phi_\varepsilon = U_\varepsilon (v_\varepsilon) \circ
f_\varepsilon^{-1}\ \in \ {\mathcal C}^\infty (\Omega_\varepsilon)$
is an eigenfunction of $\Delta_g$ with the same eigenvalue. Again, since $\forall \varepsilon$, $(M,g_\varepsilon)$ is real analytic, the curves $\varepsilon\mapsto U_\varepsilon$ and $\varepsilon\mapsto P_\varepsilon$ are analytic.   The
result of the lemma then follows
from the Rellich-Kato theory applied to $\varepsilon
\mapsto P_\varepsilon$.

\end{proof}

\bigskip

Now, let us fix a positive integer $k$ and let
$\Lambda_{1,\varepsilon}, \ldots, \Lambda_{p,\varepsilon}$ be the
family of eigenvalues associated with $\lambda_k$ by Lemma
\ref{perturbation}. Using the continuity of
$\lambda_{k,\varepsilon}$ and the analyticity of
$\Lambda_{i,\varepsilon}$ w.r.t. $\varepsilon$, we can easily see
that there exist two integers $i \leq p$ and $j \leq p$ such that
$$ \lambda_{k, \varepsilon} =\left\{ \begin{array}{l}
\Lambda_{i,\varepsilon}\ \hbox{if}\ \varepsilon \leq 0\\
\\
\Lambda_{j,\varepsilon}\ \hbox{if}\ \ \varepsilon \geq 0.
\end{array}\right.$$
Hence, $\lambda_{k,\varepsilon}$ admits a left sided and a right
sided derivatives with
$${d\over d\varepsilon} \lambda_{k,\varepsilon} {\big|_{\varepsilon=0^+}}\ \
= {d\over d\varepsilon} \Lambda_{j,\varepsilon} {\big|_{\varepsilon=0}}$$
and
$${d\over d\varepsilon} \lambda_{k,\varepsilon}{\big|_{\varepsilon=0^-}}\ \
= {d\over d\varepsilon} \Lambda_{i,\varepsilon}{\big|_{\varepsilon=0}}.$$

\begin{definition}\label{def1} The domain $\Omega$ is said to be "critical" for the
$k$-$th$ eigenvalue of Dirichlet problem 
if, for any analytic volume-preserving deformation
$\Omega_\varepsilon$ of $\Omega$, the right sided and the left
sided derivatives of $\lambda_{k,\varepsilon}$ at $\varepsilon = 0$ have opposite signs.
That is,
$${d \over d\varepsilon}\lambda_{k,\varepsilon}{\big|_{\varepsilon=0^+}}
 \times\ {d \over d\varepsilon}\lambda_{k,\varepsilon}{\big|_{\varepsilon=0^-}}
\leq 0.$$ 
\end{definition}

It is easy to see that
$${d\over d\varepsilon}\lambda_{k,\varepsilon}{\big|_{\varepsilon =0^+}} \leq 0 \leq
{d \over d\varepsilon}\lambda_{k,\varepsilon}{\big|_{\varepsilon=0^-}}\Longleftrightarrow
\lambda_{k,\varepsilon} \leq \lambda_{k,0} + o (\varepsilon)$$
and
$${d\over d\varepsilon}\lambda_{k,\varepsilon}{\big|_{\varepsilon=0^-}} \leq 0 \leq
{d \over d\varepsilon}\lambda_{k,\varepsilon}{\big|_{\varepsilon=0^+}}\Longleftrightarrow
\lambda_{k,\varepsilon} \geq \lambda_{k,0} + o (\varepsilon).$$\\
Therefore, the domain $\Omega$ is critical for the $k$-th
eigenvalue of Dirichlet problem if and only if one of the
following inequalities holds:
$$\lambda_{k,\varepsilon} \leq \lambda_{k,0} + o ({\varepsilon})$$
$$\lambda_{k,\varepsilon} \geq \lambda_{k,0} + o ({\varepsilon}) .$$\\

\begin{remark}\label{rem1} Suppose that for an integer $k$ we
have $\lambda_k < \lambda_{k+1}$, then, for sufficiently small
$\varepsilon$, we will have $\lambda_{k,\varepsilon}\displaystyle
=\max_{i \leq p} \Lambda_{i,\varepsilon}$, where
$\Lambda_{1,\varepsilon},\ldots,\Lambda_{p,\varepsilon}$ are the
eigenvalues associated to $\lambda_k$ by Lemma \ref{perturbation}
(indeed, $\Lambda_{i,0}=\lambda_k < \lambda_{k+1}$ for any $1\le
i\le p$). Hence, ${d\over d\varepsilon}\
\lambda_{k,\varepsilon}{|_{\varepsilon = 0^-}} \leq {d\over
d\varepsilon}\lambda_{k,\varepsilon}{|_{\varepsilon = 0^+}}$. In
particular, $\Omega$ is critical for the functional $\Omega
\mapsto \lambda_k (\Omega)$ if and only if ${d\over d\varepsilon}
\lambda_{k,\varepsilon} {|_{\varepsilon = 0^-}}\leq 0 \leq {d\over
d\varepsilon} \lambda_{k,\varepsilon}|_{\varepsilon = 0^+}$ (or,
equivalently, $\lambda_{k,\varepsilon} \leq \lambda_{k,0} +
o(\varepsilon)$).

Similarly, if $\lambda_{k-1} < \lambda_k$, then, for
sufficiently small $\varepsilon$, $\lambda_{k,\varepsilon}\displaystyle =\min_{i \leq p}
 \Lambda_{i,\varepsilon}$ and ${d\over d\varepsilon} \lambda_{k,\varepsilon}
|_{\varepsilon = 0^+} \leq {d\over d\varepsilon}\ \lambda_{k,\varepsilon} {|_{\varepsilon = 0^-}}$.
\end{remark}
\bigskip

\begin{lemma}\label{digonalquadra}
Let $\lambda \in Sp_{_D}(\Delta_g , \Omega)$ be an eigenvalue of multiplicity $p$
of the Dirichlet Laplacian in $\Omega$ and let us
denote by $E_\lambda$ the corresponding
eigenspace. Let $\Omega_\varepsilon = f_\varepsilon (\Omega)$ be
an analytic deformation of $\Omega$ and let $(\Lambda_{i,
\varepsilon})_{i \leq p}$ 
 and $(\phi_{i, \varepsilon})_{i \leq p} \subset {\mathcal C}^\infty
(\Omega_\varepsilon)$ be as in
Lemma \ref{perturbation}. Then
$\Lambda_1':= {d\over
d\varepsilon}\Lambda_{1,\varepsilon}\big|_{\varepsilon=0} , \cdots
,\Lambda_p':= {d\over d\varepsilon}
\Lambda_{p,\varepsilon}\big|_{\varepsilon=0}$ are the eigenvalues
of the quadratic form $q_v$ defined on the space $E_\lambda
\subset L^2(\Omega,g)$ by
$$q_v(\phi) =- \int_{\partial\Omega} v\left({\partial \phi \over \partial \nu}\right)^2 v_{\bar g},$$
where $v=g\left({d\over d\varepsilon}f_\varepsilon\big|_{\varepsilon=0},\nu\right)$.
Moreover, the $L^2$-orthonormal basis $\phi_{1,0}, \cdots ,\phi_{p,0}$ diagonalizes $q_v$ on $E_\lambda$.
\end{lemma}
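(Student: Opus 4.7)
The plan is to compute the matrix of $q_v$ in the orthonormal basis $\{\phi_{1,0},\ldots,\phi_{p,0}\}$ of $E_\lambda$ and show that it is diagonal with entries $\Lambda_1',\ldots,\Lambda_p'$. Once this is done, both conclusions of the lemma---namely that the $\Lambda_i'$ are the eigenvalues of $q_v$ and that $\{\phi_{i,0}\}$ diagonalizes it---follow simultaneously.

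The diagonal entries come straight from the first-variation formula of Corollary~\ref{vardir}. Applied to each analytic family $(\Lambda_{i,\varepsilon},\phi_{i,\varepsilon})$ provided by Lemma~\ref{perturbation}, it gives
\[
\Lambda_i' \;=\; -\int_{\partial\Omega} v\left(\frac{\partial\phi_{i,0}}{\partial\nu}\right)^2 v_{\bar g} \;=\; q_v(\phi_{i,0}).
\]

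The substantive step is to handle the off-diagonal entries. As in the proof of Corollary~\ref{vardir}, I would pass to the pulled-back data $g_\varepsilon=f_\varepsilon^*g$ and $\bar\phi_{i,\varepsilon}=\phi_{i,\varepsilon}\circ f_\varepsilon$, so that $\Delta_{g_\varepsilon}\bar\phi_{i,\varepsilon}=\Lambda_{i,\varepsilon}\bar\phi_{i,\varepsilon}$ in $\Omega$ and $\bar\phi_{i,\varepsilon}|_{\partial\Omega}=0$ for every $\varepsilon$. Differentiating this eigenvalue equation at $\varepsilon=0$, multiplying by $\phi_{j,0}$ and integrating over $(\Omega,g)$, the orthonormality condition (b) of Lemma~\ref{perturbation} kills the term containing $\Lambda_i'$; a single integration by parts in $\int_\Omega \phi_{j,0}\,\Delta\bar\phi_i'\,v_g$---justified because both $\phi_{j,0}$ and $\bar\phi_i'$ vanish on $\partial\Omega$ (the latter because $\bar\phi_{i,\varepsilon}|_{\partial\Omega}=0$ identically)---cancels the remaining $\lambda$-term. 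What survives is
\[
\int_\Omega \phi_{j,0}\,\Delta'\phi_{i,0}\,v_g \;=\; 0 \qquad (i\neq j),
\]
and the same argument with $i,j$ interchanged gives $\int_\Omega \phi_{i,0}\,\Delta'\phi_{j,0}\,v_g=0$.

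The last and, I expect, most delicate step is to translate these vanishings into the vanishing of $b_v(\phi_{i,0},\phi_{j,0})$, where $b_v$ is the symmetric bilinear form associated to $q_v$. Set $B(\phi,\psi):=\int_\Omega \psi\,\Delta'\phi\,v_g$. The calculations in the proofs of Proposition~\ref{varmetric} and Corollary~\ref{vardir} (applied to any fixed $\phi\in E_\lambda$) already give $B(\phi,\phi)=q_v(\phi)$, so $B$ and $b_v$ share the same quadratic form on $E_\lambda$. The previous step shows that the matrix of $B$ in the basis $\{\phi_{i,0}\}$ is diagonal, so $B$ is symmetric on $E_\lambda\times E_\lambda$; by uniqueness of the polarization of a quadratic form, $B$ and $b_v$ then coincide on $E_\lambda$. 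In particular $b_v(\phi_{i,0},\phi_{j,0})=0$ for $i\neq j$, hence the matrix of $q_v$ in $\{\phi_{i,0}\}$ is $\mathrm{diag}(\Lambda_1',\ldots,\Lambda_p')$, completing the proof.
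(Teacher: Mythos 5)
Your proposal is correct and follows essentially the same route as the paper's proof: pull back to $(\Omega,g_\varepsilon)$, differentiate the eigenvalue equation at $\varepsilon=0$, integrate against $\phi_{j,0}$ using orthonormality and the vanishing of $\phi_{j,0}$ and $\bar\phi_i'$ on $\partial\Omega$ to obtain $\int_\Omega \phi_{j,0}\,\Delta'\phi_{i,0}\,v_g = \Lambda_i'\delta_{ij}$, and then identify this bilinear form with $q_v$ on $E_\lambda$ via Proposition~\ref{varmetric} and Corollary~\ref{vardir}. The only cosmetic difference is that you separate the diagonal entries (taken directly from Corollary~\ref{vardir}) from the off-diagonal ones and make the polarization argument explicit, whereas the paper treats all entries in a single computation and leaves the identification of the bilinear form implicit.
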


\begin{proof} 
For simplicity, we set $g_\varepsilon:= f^*_\varepsilon g$,
$\Delta':= {d\over d\varepsilon}
\Delta_{g_\varepsilon}\big|_{\varepsilon = 0}$, $\Lambda_i:=\Lambda_{i,0}$, $\phi_i:= \phi_{i,0}$ and $\Lambda'_i:=
{d\over d\varepsilon} \Lambda_{i,\varepsilon}\big|_{\varepsilon=0}$. From
$\Delta_{g_\varepsilon} (\phi_{i,\varepsilon})= \Lambda_{i,\varepsilon} (\phi_{i,\varepsilon})$, we deduce
$$\Delta' \phi_i + \Delta \phi'_i = \Lambda'_i \phi_i + \Lambda_i \phi'_i $$
We multiply by $\phi_j$ and integrate to get
$$ \int_\Omega \phi_j \Delta' \phi_i v_g + \int_\Omega \phi_j \Delta \phi'_i v_g
= \Lambda'_i \int_\Omega \phi_i \phi_j v_g + \lambda \int_\Omega \phi_i \phi'_j v_g.$$
Integration by parts gives ( since $\phi_{j}=\phi^{'}_{i}=0$ on $\partial \Omega$)) 
$$ \int_\Omega \phi_j \Delta \phi'_i v_g = \lambda \int_\Omega \phi_i \phi'_j v_{g}.$$
Hence,
$$ \int_\Omega \phi_j \Delta' \phi_i v_g = \Lambda'_i \int_\Omega \phi_i \phi_j v_g.$$
Therefore
$$ \int_\Omega \phi_j \Delta' \phi_i v_g = \Lambda'_i \int_\Omega \phi_i \phi_j v_g.$$
It follows that the $L^2$-orthonormal basis $\phi_{1}, \cdots ,\phi_{p}$
diagonalizes the quadratic form $\phi \to \int_\Omega \phi \Delta'
\phi v_g$ on $E_\lambda$, the corresponding eigenvalues being
$\Lambda_1', \cdots ,\Lambda_p'$. As we have seen in the
proof of the corollary \ref{vardir}, this last
quadratic form coincides with $q_v$ on $E_\lambda$.
\end{proof}

Any {\it volume-preserving} deformation $\Omega_\varepsilon=f_\varepsilon(\Omega)$ induces a function $v:=g({d \over d\varepsilon}f_\varepsilon\big|_{\varepsilon=0}, \nu)$ on $\partial \Omega$ satisfying $\int _{\partial \Omega} v \, v_{\bar g} =0$ (indeed, this last integral is equal up to a constant to ${d \over d\varepsilon}vol(\Omega_\varepsilon)\big|_{\varepsilon=0}$). 
In all the sequel, we will denote by $\mathcal A_0(\partial\Omega)$ the set of regular functions on $\partial \Omega$ such that $\int _{\partial \Omega} v \, v_{\bar g} =0$. The following elementary lemma will be useful in the proof of our main results. 

\begin{lemma} \label{defconstruct} Let $v\in \mathcal A_0(\partial\Omega)$. Then there exists an 
analytic volume-preserving deformation $\Omega_{\varepsilon}=f_{\varepsilon}(\Omega)$ so that $v= g(\frac{d}{d\varepsilon}f_{\varepsilon}\big |_{\varepsilon=0},\nu)$.
\end{lemma}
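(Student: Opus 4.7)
The plan is to first produce a deformation whose initial velocity has the prescribed normal component $v$ but which is only volume-preserving to first order, and then correct it to be exactly volume-preserving by composing with an auxiliary flow, using the analytic implicit function theorem. The hypothesis $\int_{\partial\Omega} v\, v_{\bar g} = 0$ will ensure that this correction does not disturb the initial velocity at $\varepsilon=0$.

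More precisely, I would begin by extending the outer unit normal $\nu$ to a smooth vector field $N$ in a tubular neighborhood $U$ of $\partial\Omega$, extending $v$ to a smooth function $\tilde v$ on $U$, and multiplying by a cutoff $\chi$ supported in $U$ and equal to $1$ near $\partial\Omega$. Setting $V_0 := \chi\, \tilde v\, N$ and $W := \chi\, N$ gives two smooth compactly supported vector fields on $M$ with $g(V_0,\nu) = v$ and $g(W,\nu) = 1$ on $\partial\Omega$. Next I would define, using the Riemannian exponential map of the analytic metric $g$,
\[
\hat f_\varepsilon(x) := \exp_x(\varepsilon V_0(x)), \qquad \hat h_s(x) := \exp_x(s W(x)),
\]
which are diffeomorphisms for small $\varepsilon, s$, and which depend analytically on their parameters (since $\exp$ is real analytic on $TM$ and $\varepsilon \mapsto \varepsilon V_0(x)$ is linear). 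The candidate deformation will be $f_\varepsilon := \hat h_{s(\varepsilon)} \circ \hat f_\varepsilon$ for an appropriate analytic function $s(\varepsilon)$.

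To choose $s$, I would set $F(\varepsilon, s) := \mathrm{vol}_g\bigl((\hat h_s \circ \hat f_\varepsilon)(\Omega)\bigr) - \mathrm{vol}_g(\Omega)$, which via change of variables is an analytic function of $(\varepsilon,s)$ near $(0,0)$. One has $F(0,0) = 0$ and
\[
\partial_s F(0,0) = \int_{\partial\Omega} g(W,\nu)\, v_{\bar g} = \mathrm{vol}_{\bar g}(\partial\Omega) \neq 0,
\]
so the analytic implicit function theorem produces a unique analytic $s(\varepsilon)$ with $s(0) = 0$ and $F(\varepsilon, s(\varepsilon)) \equiv 0$; the resulting $f_\varepsilon$ is then an analytic volume-preserving deformation. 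Finally, differentiating $F(\varepsilon, s(\varepsilon)) = 0$ at $\varepsilon = 0$ gives
\[
\partial_\varepsilon F(0,0) + \partial_s F(0,0)\, s'(0) = 0,
\]
and since $\partial_\varepsilon F(0,0) = \int_{\partial\Omega} v\, v_{\bar g} = 0$, we conclude $s'(0) = 0$; hence the variation vector field of $f_\varepsilon$ is $V_0 + s'(0)W = V_0$, whose normal component on $\partial\Omega$ is exactly $v$.

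The main technical point to watch is the analyticity of $F$ and of the resulting $s(\varepsilon)$: this relies on the fact that $M$ is real analytic, so $\exp$ is real analytic, and therefore the pulled-back volume form $(\hat h_s \circ \hat f_\varepsilon)^* v_g$ on $\Omega$ has coefficients that are real analytic functions of $(\varepsilon,s)$ (for each $x$). Everything else is essentially formal: extending $v$ is routine, the exponential map provides an analytic-in-$\varepsilon$ family of diffeomorphisms even though $V_0$ is only smooth in $x$, and the condition $\int v = 0$ is used precisely once, to kill $s'(0)$.
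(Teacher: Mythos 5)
Your proposal is correct and follows essentially the same route as the paper: build $\hat f_\varepsilon(x)=\exp_x(\varepsilon\,\tilde v\tilde\nu)$, compose with an auxiliary one-parameter family to restore the volume constraint via the analytic implicit function theorem, and observe that $\int_{\partial\Omega}v\,v_{\bar g}=0$ forces the correction to vanish to first order, so the initial velocity is unchanged. The only (cosmetic) difference is that you take $\hat h_s(x)=\exp_x(sW(x))$ as the auxiliary family, whereas the paper uses the flow $\gamma_t$ of an analytic vector field $X$ with $\int_\Omega\mathrm{div}\,X\,v_g\neq 0$; your choice sidesteps the need for an analytic extension since $s\mapsto\exp_x(sW(x))$ is a geodesic, hence automatically analytic in $s$.
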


\begin{proof}
Let $U\subset M$ be an open neighborhood of $\bar{\Omega}$ and let $\tilde{v}$ and $\tilde{\nu}$ be smooth extensions to $U$ of $v$ and $\nu$ respectively. For
$\varepsilon$ sufficiently small, the map $ \varphi_{\varepsilon}(x)= {\rm exp}_{x}\,\varepsilon \, \tilde{v}(x)\, \tilde{\nu}(x)$ is a diffeomorphism from $\Omega$ to $\varphi_{\varepsilon}(\Omega)$. Moreover, since $(M,g)$ is real analytic, the curve $\varepsilon \rightarrow \varphi_{\epsilon}$ is analytic w.r.t. $\varepsilon$. The deformation $\varphi_{\varepsilon}(\Omega)$ is not necessarily volume-preserving. However, let $X$ be any analytic vectorfield on $U$ such
that $ \int_{\Omega} {\rm div} X v_{g} \ne 0$ and denote by $(\gamma_{t})_{t}$ the associated 1-parameter local group of diffeomorphisms. The function $ (t,\varepsilon) \mapsto F(t,\varepsilon)= vol(\gamma_{t}\circ \varphi_{\varepsilon}(\Omega))$ satisfies $ \frac{\partial}{\partial t}F(0,0)= \int_{\Omega} {\rm div} X v_{g} \ne 0 $. Applying the implicit function theorem in the analytic setting, we get the existence of a function $t(\varepsilon)$ depending analytically on $\varepsilon\in (-\eta,\eta)$, for some $\eta>0$ sufficiently small, such that $ F(t(\varepsilon),\varepsilon)=F(0,0)$,  $\forall \varepsilon\in (-\eta,\eta)$. The deformation $f_{\varepsilon}=\gamma_{t(\varepsilon)}\circ \varphi_{\varepsilon}$ is clearly analytic and volume-preserving. Moreover, one has
$$ t'(0)=\displaystyle{ -\frac {\frac{d}{d\varepsilon} vol(\varphi_{\varepsilon}(\Omega))\big |_{\varepsilon=0}}{{\frac{d}{dt}vol(\gamma_{t}(\Omega))\big |_{t=0}}}= 
-\frac{\int_{ \Omega}{\rm div} \tilde{v}\tilde{\nu} \, v_{{g}}}{\int_{\Omega}{\rm div}  X v_{{g}}}
=-\frac{\int_{\partial \Omega}v\, v_{\bar{g}}}{\int_{\partial \Omega}\left\langle X,\nu\right\rangle v_{\bar{g}}}}=0.$$
Therefore, $\forall x \in \partial \Omega$,
$$ \frac{d}{d\varepsilon}f_{\varepsilon}(x)\big|_{\varepsilon=0}= t'(0) X(x)+ \frac{d\varphi_{\epsilon}(x)}{d\varepsilon}\big|_{\varepsilon=0}= v(x) \nu(x).$$
\end{proof}


\subsection{Critical domains for the $k$-th eigenvalue of the Dirichlet Laplacian}

In all the sequel, we will denote by $\lambda_k$ the $k$-th
eigenvalue of the Dirichlet problem in $\Omega$ and by $E_k$ the
corresponding eigenspace.

In the following results, a special role is played by the eigenvalues $\lambda_k$ satisfying $\lambda_k >\lambda_{k-1}$ or $\lambda_k <\lambda_{k+1}$. This means that the index $k$ is the lowest or the highest one among all the indices corresponding to the same eigenvalue. Let us start with the following necessary condition to be satisfied by a locally minimizing or locally maximizing domain. Here, a local minimizer (resp. maximizer) for the $k$-th eigenvalue of the Dirichlet Laplacian is a domain $\Omega$ such that, for any volume-preserving deformation $\Omega_\varepsilon$, the function $\varepsilon \mapsto \lambda_ {k,\varepsilon}$ admits a local minimum (resp. maximum) at $\varepsilon =0$.

\begin{theorem}\label{lambda_k min}
Let $k$ be a natural integer such that $\lambda_k >\lambda_{k-1}$
(resp. $\lambda_k <\lambda_{k+1}$) and assume that $\Omega$ is a
local minimizer (resp. local maximizer) for the $k$-th eigenvalue
of the Dirichlet Laplacian. Then $\lambda_k$ is simple and the
absolute value of the normal derivative of its corresponding
eigenfunction is constant on $\partial \Omega$. That is, there
exists a unique (up to sign) function $\phi$ satisfying
$$\left\{
\begin{array}{l}
\Delta {\phi} = \lambda_k {\phi}\ \hbox{in}\ \Omega \\
\\
{\phi}=0 \ \ \hbox{on} \ \partial\Omega\\
\\
|{\partial \phi\over \partial\nu}| =1 \ \ \hbox{on}\ \partial\Omega.
\end{array}
\right.$$
\end{theorem}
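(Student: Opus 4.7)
The plan is to exploit Lemma \ref{perturbation} and Lemma \ref{digonalquadra} to translate the local-minimizer hypothesis into a vanishing statement for the quadratic form $q_v$ on $E_k$, and then to use unique continuation to exclude multiplicities larger than one.

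First, fix an arbitrary analytic volume-preserving deformation $\Omega_\varepsilon = f_\varepsilon(\Omega)$ and let $\Lambda_{1,\varepsilon},\ldots,\Lambda_{p,\varepsilon}$ and $\phi_{1,\varepsilon},\ldots,\phi_{p,\varepsilon}$ be the analytic branches of Lemma \ref{perturbation}, where $p := \dim E_k$. Since $\lambda_k > \lambda_{k-1}$, Remark \ref{rem1} gives $\lambda_{k,\varepsilon} = \min_{i\le p}\Lambda_{i,\varepsilon}$ for $|\varepsilon|$ small; the minimizer hypothesis yields $\Lambda_{i,\varepsilon} \ge \lambda_k = \Lambda_{i,0}$ near $0$ for every $i$, hence $\Lambda_i'(0) = 0$. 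By Lemma \ref{digonalquadra} these are exactly the eigenvalues of $q_v$ on $E_k$, so $q_v$ vanishes identically on $E_k$; by polarization the associated symmetric bilinear form also vanishes on $E_k \times E_k$.

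By Lemma \ref{defconstruct}, $v$ ranges over all of $\mathcal{A}_0(\partial\Omega)$ as the deformation varies, so for every $\phi, \psi \in E_k$ and every $v \in \mathcal{A}_0(\partial\Omega)$,
$$\int_{\partial\Omega} v\,\frac{\partial\phi}{\partial\nu}\frac{\partial\psi}{\partial\nu}\, v_{\bar g} = 0.$$
Hence each product $(\partial\phi/\partial\nu)(\partial\psi/\partial\nu)$ is $L^2(\partial\Omega)$-orthogonal to every zero-mean function, so it is constant on $\partial\Omega$. To eliminate multiplicity, assume $p \ge 2$ and pick orthonormal $\phi_1, \phi_2 \in E_k$; fix $x_0 \in \partial\Omega$ and set $a_0 := (\partial\phi_1/\partial\nu)(x_0)$, $b_0 := (\partial\phi_2/\partial\nu)(x_0)$. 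The function $\psi := b_0\phi_1 - a_0\phi_2 \in E_k$ satisfies
$$\left(\frac{\partial\psi}{\partial\nu}\right)^2 = b_0^2\!\left(\frac{\partial\phi_1}{\partial\nu}\right)^2 - 2a_0 b_0\,\frac{\partial\phi_1}{\partial\nu}\frac{\partial\phi_2}{\partial\nu} + a_0^2\!\left(\frac{\partial\phi_2}{\partial\nu}\right)^2,$$
which is constant on $\partial\Omega$ and vanishes at $x_0$, hence is identically zero. Thus $\psi$ and $\partial\psi/\partial\nu$ both vanish on $\partial\Omega$, and unique continuation (valid in the real-analytic setting) forces $\psi \equiv 0$ on $\Omega$. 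Since $a_0, b_0$ cannot vanish without killing $\phi_1$ or $\phi_2$ by the same argument, this contradicts the linear independence of $\phi_1$ and $\phi_2$.

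Therefore $\lambda_k$ is simple; the unique $L^2$-normalized eigenfunction satisfies $(\partial\phi/\partial\nu)^2 \equiv c$ for a positive constant $c$, and rescaling by $\sqrt{c}$ gives the unique-up-to-sign function with $|\partial\phi/\partial\nu|=1$. The maximizer case is symmetric: under $\lambda_k < \lambda_{k+1}$, Remark \ref{rem1} gives $\lambda_{k,\varepsilon} = \max_i \Lambda_{i,\varepsilon}$, so the maximizing condition again forces every $\Lambda_i'(0) = 0$ and the remainder of the argument is identical. The delicate step is the multiplicity reduction: it uses not only the diagonal constancy of $(\partial\phi/\partial\nu)^2$ for each eigenfunction but the full bilinear-form identity supplied by polarization, together with the unique continuation property that rules out boundary Cauchy data simultaneously vanishing on $\partial\Omega$.
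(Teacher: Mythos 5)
Your proof is correct and follows essentially the same route as the paper's: deduce $\Lambda_i'(0)=0$ for every analytic branch from Remark \ref{rem1} and the local-extremum hypothesis, conclude via Lemma \ref{digonalquadra} and Lemma \ref{defconstruct} that $q_v$ vanishes on $E_k$ for every zero-mean $v$, read off constancy of boundary normal derivatives, and eliminate multiplicity by building a linear combination with vanishing Cauchy data and applying unique continuation (Holmgren in the real-analytic setting).

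Two small remarks. First, the polarization step and the displayed expansion of $\bigl(\partial\psi/\partial\nu\bigr)^2$ are not actually needed: since $\psi=b_0\phi_1-a_0\phi_2$ lies in $E_k$, the diagonal constancy of $\bigl(\partial\psi/\partial\nu\bigr)^2$ already follows directly from $q_v(\psi)=0$ for all $v\in\mathcal A_0(\partial\Omega)$. Your closing claim that the full bilinear-form identity is essential therefore overstates the matter, though it does no harm. Second, you implicitly absorb the paper's final paragraph (equality of the constant $|\partial\phi/\partial\nu|$ across boundary components) into the earlier step, since $\int_{\partial\Omega}v\,(\partial\phi/\partial\nu)^2\,v_{\bar g}=0$ for every zero-mean $v$ already gives global, not merely componentwise, constancy of $(\partial\phi/\partial\nu)^2$; the paper handles this with an explicit test function but the content is the same. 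Both points are cosmetic; there is no gap.
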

\begin{proof}
Suppose that $\lambda_k > \lambda_{k-1}$ and let
$\Omega_\varepsilon = f_\varepsilon (\Omega)$ be a volume
preserving analytic deformation of $\Omega$. Let
$(\Lambda_{i,\varepsilon}) {_{i \leq p}}$ and
$(\phi_{i,\varepsilon})_{i \leq p}$ be families of eigenvalues and
eigenfunctions associated to $\lambda_k$ according to Lemma
\ref{perturbation}. Since $\Lambda_{i,0}= \lambda_k >
\lambda_{k-1}$, we have, for sufficiently small $\varepsilon$, for
continuity reasons,
$$\Lambda_{i,\varepsilon} > \lambda_{k-1, \varepsilon}.$$
Hence,
$$\Lambda_{i,\varepsilon} \ge \lambda_{k, \varepsilon}.$$
As the function $\varepsilon \mapsto \lambda_ {k,\varepsilon}$
admits a local minimum at $\varepsilon =0$ with $\Lambda_{i,0}=
\lambda_{k,0}=\lambda_k$, it follows that the differentiable
function $\varepsilon \mapsto \Lambda_ {i,\varepsilon}$ achieves a
local minimum at $\varepsilon =0$ and that ${d \over d\varepsilon}
\Lambda_ {i,\varepsilon}\big|_{\varepsilon =0}=0$. Applying Lemma
\ref{digonalquadra}, we deduce that the quadratic form $q_v$ is
identically zero on the eigenspace $E_k$, where $v= g({d \over
d\varepsilon} f_\varepsilon\big|_{\varepsilon =0}, \nu)$.
 The volume-preserving deformation being arbitrary, it follows that the form $q_v$
 vanishes on $E_k$ for any $v\in \mathcal A_0(\partial\Omega)$ (Lemma \ref{defconstruct}).
 Therefore, $\forall \phi \in E_k$ and $\forall v\in \mathcal A_0(\partial\Omega)$,
 we have $\int_{\partial\Omega} v \left({\partial \phi \over \partial \nu}\right)^2 v_{\bar g}=0$,
 which implies that ${\partial \phi \over \partial \nu}$ is locally constant on $\partial \Omega$
 for any $\phi \in E_k$. Now, if $\phi_1$ and $\phi_2$ are two eigenfunctions in $E_k$,
 one can find a linear combination $\phi = \alpha \phi_1+\beta \phi_2$
 so that $\partial\phi\over \partial \nu$ vanishes on at least one
connected component of $\partial \Omega$. We apply Holmgren uniqueness theorem (see for instance \cite[Theorem 2, p. 42]{Ra}, and recall that $(M,g)$ is assumed to be real analytic) to deduce that $\phi$ is identically zero in $\Omega$ and that $ \lambda_ k$ is simple.

To finish the proof, we must show that, $\forall \phi\in E_k$, $|{\partial \phi \over \partial \nu}|$
takes the same constant value on all the components of $\partial\Omega$. Indeed, let $\Sigma_1$
and $\Sigma_2$ be two distinct connected components of $\partial\Omega$
 and let $v\in \mathcal A_0(\partial\Omega)$ be the function given
 by $v=vol(\Sigma_2)$ on $\Sigma_1$, $v=-vol(\Sigma_1)$ on $\Sigma_2$
 and $v=0$ on the other components. Then the condition
 $\int_{\partial\Omega} v \left({\partial \phi \over \partial \nu}\right)^2 v_{\bar g}=0$
 implies that $\left({\partial \phi \over \partial \nu}\right)^2\big|_{\Sigma_1}=
 \left({\partial \phi \over \partial \nu}\right)^2\big|_{\Sigma_2}$.

 Of course, the same arguments work in the case $\lambda_k
 <\lambda_{k+1}$.
\end{proof}
The criticality of the domain $\Omega$ for the $k$-th eigenvalue
of Dirichlet Laplacian is
closely related to the definiteness of the quadratic forms $q_v$
introduced in Lemma \ref{digonalquadra} above, on the eigenspace
$E_k$. Indeed, we have the following

\begin{theorem}\label{quadradir}
Let $k$ be any natural integer.
\begin{enumerate}
\item If $\Omega$ is a critical domain for the $k$-th eigenvalue
of the Dirichlet Laplacian, then, $\forall v\in \mathcal A_0(\partial\Omega)$,
the quadratic form $q_v(\phi)
=- \int_{\partial\Omega} v \left({\partial \phi \over \partial \nu}\right)^2 v_{\bar g}$
is not definite on $E_{k}$.

\item Assume that $\lambda_k >\lambda_{k-1}$ or $\lambda_k <\lambda_{k+1}$
and that $\forall v\in \mathcal A_0(\partial\Omega)$, the quadratic form
$q_v(\phi) =- \int_{\partial\Omega} v \left({\partial \phi \over \partial \nu}\right)^2 v_{\bar g}$
is not definite on $E_{k}$, then $\Omega$ is a critical domain for
the $k$-th eigenvalue of the Dirichlet Laplacian.
\end{enumerate}

\end{theorem}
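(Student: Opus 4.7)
The plan is to combine the three preparatory lemmas with Remark \ref{rem1}. Lemma \ref{defconstruct} lets me realize any $v \in \mathcal A_0(\partial\Omega)$ as the normal velocity of an analytic volume-preserving deformation $\Omega_\varepsilon = f_\varepsilon(\Omega)$. Lemma \ref{perturbation} then provides the analytic families $(\Lambda_{i,\varepsilon})_{i\le p}$ and $(\phi_{i,\varepsilon})_{i\le p}$ through $\lambda_k$, and Lemma \ref{digonalquadra} identifies $\Lambda'_1,\dots,\Lambda'_p$ with the eigenvalues of $q_v$ on $E_k$. The argument then reduces to comparing the one-sided derivatives of $\lambda_{k,\varepsilon}$ with these $\Lambda'_i$.

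For part (1), fix any $v\in \mathcal A_0(\partial\Omega)$ and construct the corresponding deformation. From the discussion following Lemma \ref{perturbation}, there exist $i_\pm\le p$ with $\lambda_{k,\varepsilon}=\Lambda_{i_-,\varepsilon}$ for $\varepsilon\le 0$ and $\lambda_{k,\varepsilon}=\Lambda_{i_+,\varepsilon}$ for $\varepsilon\ge 0$. Hence the one-sided derivatives of $\lambda_{k,\varepsilon}$ at $0$ equal $\Lambda'_{i_-}$ and $\Lambda'_{i_+}$, two eigenvalues of $q_v$. The criticality hypothesis $\Lambda'_{i_-}\Lambda'_{i_+}\le 0$ therefore exhibits both a non-positive and a non-negative eigenvalue of $q_v$ on $E_k$, ruling out positive and negative definiteness.

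For part (2), suppose first that $\lambda_{k-1}<\lambda_k$; the other case is symmetric. Given any analytic volume-preserving deformation, Remark \ref{rem1} yields $\lambda_{k,\varepsilon}=\min_{i\le p}\Lambda_{i,\varepsilon}$ for small $|\varepsilon|$. Since each $\Lambda_{i,\varepsilon}$ is analytic and $\Lambda_{i,0}=\lambda_k$ for all $i$, the index $i_+$ realizing the minimum on $(0,\delta)$ is constant (up to shrinking $\delta$) and satisfies $\Lambda_{i_+,\varepsilon}\le \Lambda_{i,\varepsilon}$ on that interval; dividing by $\varepsilon>0$ and letting $\varepsilon\to 0^+$ gives $\Lambda'_{i_+}=\min_i\Lambda'_i$. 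An identical argument on $(-\delta,0)$, where division by $\varepsilon<0$ reverses the inequality, gives $\Lambda'_{i_-}=\max_i\Lambda'_i$. Combining these with Lemma \ref{digonalquadra}, the right and left derivatives of $\lambda_{k,\varepsilon}$ at $0$ coincide with the smallest and largest eigenvalues of $q_v$ on $E_k$. The non-definiteness hypothesis gives $\min_i\Lambda'_i\le 0\le \max_i\Lambda'_i$, hence the product of one-sided derivatives is $\le 0$, which is exactly Definition \ref{def1}.

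The only point that needs a bit of care is the extremal identification of one-sided derivatives with $\min_i\Lambda'_i$ or $\max_i\Lambda'_i$: one must observe that, thanks to analyticity, the minimum (resp. maximum) of finitely many real-analytic functions agreeing at $0$ is realised on each half-interval $(0,\delta)$ and $(-\delta,0)$ by a single branch, and then sign-track the inequality through division by a positive or negative $\varepsilon$. Once this is done, both directions of the theorem are essentially formal consequences of the three lemmas together with Remark \ref{rem1}, and no further analytic input is needed.
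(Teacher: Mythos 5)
Your proposal is correct and follows essentially the same route as the paper: part (1) uses Lemmas \ref{defconstruct}, \ref{perturbation}, and \ref{digonalquadra} to turn criticality into a sign condition on two eigenvalues of $q_v$, and part (2) uses Remark \ref{rem1} to identify the one-sided derivatives of $\lambda_{k,\varepsilon}$ with the extreme eigenvalues of $q_v$ on $E_k$. The only difference is that you spell out explicitly why the one-sided derivatives of $\min_i\Lambda_{i,\varepsilon}$ equal $\min_i\Lambda'_i$ and $\max_i\Lambda'_i$ (via the total ordering of finitely many analytic branches on each half-interval), a step the paper states without elaboration.
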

\begin{proof}
(1) Consider a function $v\in \mathcal A_0(\partial\Omega)$ and let
$\Omega_\varepsilon=f_\varepsilon(\Omega)$ be an analytic volume-preserving
deformation of $\Omega$ so that $v:=g({d \over
d\varepsilon}f_\varepsilon\big|_{\varepsilon=0}, \nu)$ (Lemma \ref{defconstruct}). Let
$(\Lambda_{i,\varepsilon}){_{i \leq p}}$ and
$(\phi_{i,\varepsilon})_{i \leq p}$ be families of eigenvalues and
eigenfunctions associated to $\lambda_k$ according to Lemma
\ref{perturbation}. As we have seen above, there exists two
integers $i\le p$ and $j\le p$ so that ${d\over d\varepsilon}
\lambda_{k,\varepsilon}{\big|_{\varepsilon=0^-}} = {d\over
d\varepsilon} \Lambda_{i,\varepsilon}{\big|_{\varepsilon=0}}$ and
${d\over d\varepsilon} \lambda_{k,\varepsilon}
{\big|_{\varepsilon=0^+}} = {d\over d\varepsilon}
\Lambda_{j,\varepsilon} {\big|_{\varepsilon=0}}$. The criticality
of $\Omega$ then implies that ${d\over d\varepsilon}
\Lambda_{i,\varepsilon}{\big|_{\varepsilon=0}} \times {d\over
d\varepsilon} \Lambda_{j,\varepsilon}{\big|_{\varepsilon=0}} \le
0$. Applying Lemma \ref{digonalquadra}, we deduce that the
quadratic form $q_v$ admits both nonnegative and nonpositive
eigenvalues on $E_k$ which proves Assertion 1.

(2) Assume that $\lambda_k >\lambda_{k-1}$ and let
$\Omega_\varepsilon=f_\varepsilon(\Omega)$ be a volume-preserving
deformation of $\Omega$. Let $(\Lambda_{i,\varepsilon}){_{i \leq
p}}$ and $(\phi_{i,\varepsilon})_{i \leq p}$ be families of
eigenvalues and eigenfunctions associated to $\lambda_k$ according
to Lemma \ref{perturbation}. As we have seen in Remark \ref{rem1},
we have, for sufficiently small $\varepsilon$,
$\lambda_{k,\varepsilon}\displaystyle =\min_{i \leq p}
\Lambda_{i,\varepsilon}$. Hence,
$${d\over d\varepsilon} \lambda_{k,\varepsilon}\big|_{\varepsilon = 0^+}
=\min_{i \leq p}{d\over d\varepsilon}\Lambda_{i,\varepsilon}\big|_{\varepsilon=0}$$
and
$${d\over d\varepsilon}\ \lambda_{k,\varepsilon} {\big|_{\varepsilon = 0^-}}
=\max_{i \leq p}{d\over
d\varepsilon}\Lambda_{i,\varepsilon}\big|_{\varepsilon=0}.$$ Now,
the non definiteness of $q_v$ on $E_k$ means that its smallest
eigenvalue is nonpositive and its largest one is nonnegative.
According to Lemma \ref{digonalquadra}, this implies that ${d\over
d\varepsilon} \lambda_{k,\varepsilon}\big|_{\varepsilon =
0^+}=\min_{i \leq p}{d\over
d\varepsilon}\Lambda_{i,\varepsilon}\big|_{\varepsilon=0}\le 0$
and ${d\over d\varepsilon}\ \lambda_{k,\varepsilon}
{\big|_{\varepsilon = 0^-}}=\max_{i \leq p}{d\over
d\varepsilon}\Lambda_{i,\varepsilon}\big|_{\varepsilon=0}\ge 0$
which implies the criticality of the domain $\Omega$.

The case $\lambda_k <\lambda_{k+1}$ can be handled similarly.
\end{proof}

The indefiniteness of $q_v$ for any $v\in \mathcal
A_0(\partial\Omega)$ can be interpreted intrinsically in the
following manner:

\begin{lemma}\label{quadranondef}
Let $k$ be a natural integer. The two following conditions are
equivalent:
\begin{itemize}
\item[(i)] $\forall v\in \mathcal A_0(\partial\Omega)$, the
quadratic form $q_v$ is not definite on $E_{k}$ \item[(ii)] there
exists a finite family of eigenfunctions $(\phi_i)_{i \leq
m}\subset E_k$ satisfying
$$\displaystyle\sum_{i=1}^{m} \left({\partial \phi_i\over \partial \nu}\right)
^2=1\;\; \hbox{on}\; {\partial\Omega}.$$
\end{itemize}
 \end{lemma}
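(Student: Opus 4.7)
The plan is to prove $(ii) \Rightarrow (i)$ by a direct summation and the contrapositive $\neg(ii) \Rightarrow \neg(i)$ by a finite-dimensional convex separation argument. The easy direction is immediate: if $\phi_1, \dots, \phi_m \in E_k$ satisfy $\sum_{i=1}^m (\partial\phi_i/\partial\nu)^2 = 1$ on $\partial\Omega$, then for any $v \in \mathcal A_0(\partial \Omega)$ one has $\sum_{i=1}^m q_v(\phi_i) = -\int_{\partial \Omega} v \sum_i (\partial\phi_i/\partial\nu)^2 \, v_{\bar g} = -\int_{\partial\Omega} v \, v_{\bar g} = 0$, which is incompatible with $q_v$ being either strictly positive or strictly negative definite on $E_k$ (at least one $\phi_i$ is nonzero).

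For the converse I would recast the problem in matrix form. Fix an $L^2$-orthonormal basis $\{e_1, \dots, e_p\}$ of $E_k$, set $\psi_j := \partial e_j/\partial\nu$, and associate to each $v$ the symmetric $p \times p$ matrix $Q_v$ with entries $(Q_v)_{jk} := \int_{\partial\Omega} v \, \psi_j \psi_k \, v_{\bar g}$. The matrix of $-q_v$ in the basis $\{e_j\}$ is then $Q_v$, so ``$q_v$ strictly definite'' is equivalent to ``$Q_v$ strictly definite''. Holmgren's unique continuation theorem (available in the real-analytic setting, as used in the proof of Theorem \ref{lambda_k min}) forbids any nontrivial combination $\sum a_j \psi_j$ from vanishing on $\partial\Omega$; this yields (a) the Gram matrix $B_{jk} := \int_{\partial\Omega} \psi_j \psi_k \, v_{\bar g}$ is strictly positive definite, and (b) the linear map $\Psi : \mathrm{Sym}_p \to V$, $A \mapsto \sum_{j,k} A_{jk} \psi_j \psi_k$, is injective into the finite-dimensional subspace $V := \mathrm{Im}(\Psi) \subset C^\infty(\partial\Omega)$. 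Consequently $C := \Psi(\{A \succeq 0\})$ is a closed convex cone in $V$, and condition $(ii)$ is exactly the statement $1 \in C$.

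Assume $1 \notin C$ and set $\mathrm{av}(f) := \frac{1}{\mathrm{vol}(\partial\Omega)}\int_{\partial\Omega} f \, v_{\bar g}$. If $1 \notin V$, let $w := 1 - \pi_V(1) \neq 0$ be the $L^2(\partial\Omega)$-orthogonal projection off $V$; then $Q_w = 0$ (since $w \perp \psi_j \psi_k$) and $\mathrm{av}(w) = \|w\|^2/\mathrm{vol}(\partial\Omega) > 0$, so $v := w - \mathrm{av}(w) \in \mathcal A_0$ satisfies $Q_v = -\mathrm{av}(w) B$, which is strictly negative definite. If instead $1 \in V \setminus C$, strict Hahn--Banach separation in the finite-dimensional space $V$ yields $w \in V$ with $\int_{\partial\Omega} w \, \Psi(A) \, v_{\bar g} \geq 0$ for every psd $A$ (equivalently $Q_w \succeq 0$) and $\mathrm{av}(w) < 0$; then $v := w - \mathrm{av}(w) \in \mathcal A_0$ gives $Q_v = Q_w - \mathrm{av}(w) B$, a sum of a psd matrix and a strictly positive definite one (as $-\mathrm{av}(w) > 0$), hence strictly positive definite. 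In either case $q_v$ is strictly definite, contradicting $(i)$.

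The main delicate point is the closedness of $C$ in $V$: the image of the psd cone under a linear map is not closed in general, and this closedness is precisely what legitimizes a \emph{strict} Hahn--Banach separation of $1$ from $C$, rather than one that only separates $1$ from $\overline{C}$. Here it is ensured by the injectivity of $\Psi$, which is in turn a manifestation of Holmgren's theorem in the real-analytic context.
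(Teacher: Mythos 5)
Your overall strategy is sound and is a cleaner, more explicit reformulation of the paper's own argument: the paper applies Hahn--Banach separation directly to the convex hull $K$ of $\bigl\{(\partial\phi/\partial\nu)^2 : \phi\in E_k\bigr\}$, whereas you recast this as separating $1$ from $C=\Psi(\mathrm{PSD}_p)$. You are also right that closedness of this cone is the delicate point that legitimizes strict separation; the paper leaves that implicit. The easy direction and the two-case separation argument are correct.

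However, the justification you give for closedness is a genuine gap. You claim that $\Psi:\mathrm{Sym}_p\to C^\infty(\partial\Omega)$, $A\mapsto\sum_{j,k}A_{jk}\psi_j\psi_k$, is injective, and attribute this to Holmgren's theorem. Holmgren yields linear independence of the first-order quantities $\psi_1,\dots,\psi_p$ (no nontrivial $\sum a_j\psi_j$ vanishes), but that does not imply linear independence of the quadratic monomials $\psi_j\psi_k$. For a toy illustration of the logical gap: $1,x,x^2$ are linearly independent yet $1\cdot x^2-x\cdot x=0$, i.e.\ the map $A\mapsto\sum A_{jk}\psi_j\psi_k$ with $\psi_1=1,\psi_2=x,\psi_3=x^2$ has nontrivial kernel. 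Equivalently, after diagonalizing $A=\sum_l\mu_lw_lw_l^T$, the relation $\sum_l\mu_l\eta_l^2\equiv 0$ with $\eta_l=\sum_jw_{lj}\psi_j$ only forces $\mu_l=0$ when all $\mu_l$ have one sign; when signs are mixed, Holmgren gives no contradiction. So injectivity of $\Psi$ is not established (and may well fail for some domains).

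Fortunately, the conclusion you need (closedness of $C$) does hold, and it follows from the part you did prove, namely $B\succ 0$. Indeed, suppose $\Psi(A_n)\to f$ with $A_n\succeq 0$. Then $\mathrm{tr}(BA_n)=\int_{\partial\Omega}\Psi(A_n)\,v_{\bar g}\to\int_{\partial\Omega}f\,v_{\bar g}$ is bounded, and since $B$ is positive definite, the $\mathrm{tr}(B\,\cdot)$-sublevel sets of the PSD cone are compact; hence $\{A_n\}$ is bounded, a subsequence converges to some $A\succeq 0$, and $\Psi(A)=f$. Replacing your injectivity claim by this compactness argument makes the proof complete; the rest of your write-up (Case $1\notin V$ and Case $1\in V\setminus C$) is correct as stated.
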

\begin{proof} To see that $(ii)$ implies $(i)$, it suffices to notice that, for any $ v\in \mathcal A_0(\partial\Omega)$
$$\sum_{i\le m} q_v(\phi_i) =-\sum_{i\le m} \int_{\partial\Omega} v\left({\partial \phi_i\over \partial \nu}\right)^2 v_{\bar g} =- \int_{\partial\Omega} v v_{\bar g} =0.$$
Therefore, $q_v$ is not definite on $E_{k}$.

The proof of ``$(i)$ implies $(ii)$" uses arguments similar to
those used in the case of closed manifolds by Nadirashvili
\cite{N} and the authors \cite{EI2}. Let $K$ be the convex hull of
$\left\{\left({\partial\phi\over
\partial \nu}\right)^2,\phi \in E_k\right\}$ in ${\mathcal C}^\infty (\partial
\Omega)$. Then, we need to show that the constant function 1 belongs to $K$.

Let us suppose for a contradiction that $1 \not\in K$, then, from
the Hahn-Banach theorem (applied in the finite dimensional vector
space spanned by $K$ and $1$ and endowed with the $L^2(\partial
\Omega, \bar g)$ inner product), there exists a function $v \in
{\mathcal C}^\infty (\partial \Omega)$ such that
$\int_{\partial\Omega} v\ v_{\bar g} >0$ and, $\forall \phi \in
E_k$,
$$\int_{\partial\Omega} v\bigl({\partial\phi \over \partial \nu}\bigr)^2 v_{\bar g}\le 0.$$
Hence, the zero mean value function $v_o=v-{1\over vol(\partial\Omega)} \int_{\partial\Omega} v\ v_{\bar g}$
satisfies, $\forall \phi \in E_k$,
\begin{eqnarray}
\nonumber {} q_{v_0}(\phi) &=& - \int_{\partial\Omega} v_o\left({\partial\phi \over \partial \nu}\right)^2 v_{\bar g}\\
\nonumber {} &=& - \int_{\partial\Omega} v \left({\partial\phi \over \partial \nu}\right)^2 v_{\bar g}
+ {1\over vol(\partial\Omega)} \int_{\partial\Omega} v\ v_{\bar g} \int_{\partial\Omega}\left({\partial\phi \over \partial \nu}\right)^2
 v_{\bar g}\\
\nonumber {} &\ge&{1\over vol(\partial\Omega)} \int_{\partial\Omega} v\ v_{\bar g} \int_{\partial\Omega}\left({\partial\phi \over \partial \nu}\right)^2 v_{\bar g},
\end{eqnarray}
with $\int_{\partial\Omega}\left({\partial\phi \over \partial
\nu}\right)^2 v_{\bar g}>0$ for any non trivial Dirichlet
eigenfunction $\phi$ (due to Holmgren uniqueness theorem). In
conclusion, the function $ v_o\in \mathcal A_0(\partial\Omega)$ is
such that the quadratic form $q_{v_0}$ is positive definite on
$E_k$, which contradicts Condition $(i)$.
\end{proof}

A consequence of this lemma and Theorem \ref{quadradir} is the
following:
\begin{theorem}\label{lambda_k}
Let $k$ be any natural integer.
\begin{enumerate}
\item If $\Omega$ is a critical domain for the $k$-th eigenvalue of Dirichlet Laplacian, then there exists a
finite family of eigenfunctions $(\phi_i)_{i \leq m}\subset E_k$ satisfying
$\sum_{i=1}^{m} \left({\partial \phi_i\over \partial \nu}\right)
^2=1$ on ${\partial\Omega}$,
that is, $(\phi_i)_{i \leq m}$ are solutions of the following system
$$\left\{
\begin{array}{l}
\Delta {\phi_i} = \lambda_k {\phi_i} \ \hbox {in} \ \Omega, \, \ \forall i \leq m, \\
\\
{\phi_i}=0 \ \hbox {on} \ \partial \Omega,\ \forall i \leq m,\\
\\
\sum_{i=1}^{m} \left({\partial \phi_i\over \partial \nu}\right)^2 =1 \ \hbox {on} \ \partial \Omega.
\end{array}
\right.$$
\item Assume that $\lambda_k > \lambda_{k-1}$ or $\lambda_k < \lambda_{k+1}$ and that there exists a finite family of eigenfunctions $(\phi_i)_{i \leq m}\subset E_k$ such that $\sum_{i=1}^{m} \left({\partial \phi_i\over \partial \nu}\right)
^2$ is constant on ${\partial\Omega}$, then the domain $\Omega$ is critical for the k-th eigenvalue
of the Dirichlet Laplacian.
\end{enumerate}
\end{theorem}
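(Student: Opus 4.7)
The statement is essentially a corollary combining Theorem \ref{quadradir} with the characterization provided by Lemma \ref{quadranondef}, so the plan is to assemble these two ingredients and handle the normalization issue in part (2).

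For assertion (1), the plan is direct: assuming $\Omega$ is critical for $\lambda_k$, Theorem \ref{quadradir}(1) tells us that the quadratic form $q_v$ is not definite on $E_k$ for every $v\in \mathcal A_0(\partial\Omega)$. By the equivalence (i)$\Longleftrightarrow$(ii) of Lemma \ref{quadranondef}, this non-definiteness property is exactly equivalent to the existence of a finite family $(\phi_i)_{i \le m}\subset E_k$ with $\sum_{i=1}^m (\partial \phi_i/\partial \nu)^2 = 1$ on $\partial\Omega$. Since each $\phi_i \in E_k$ automatically satisfies $\Delta \phi_i = \lambda_k\phi_i$ in $\Omega$ and $\phi_i=0$ on $\partial\Omega$, we obtain the overdetermined system of the statement.

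For assertion (2), the only extra step is a rescaling argument. The hypothesis furnishes $(\phi_i)_{i \le m}\subset E_k$ with $\sum_{i=1}^m (\partial \phi_i/\partial \nu)^2 = c$ for some constant $c$ on $\partial\Omega$. I would first observe that $c>0$: otherwise all $\partial\phi_i/\partial\nu$ would vanish on $\partial\Omega$, so by Holmgren's uniqueness theorem (used as in the proof of Theorem \ref{lambda_k min}, invoking the real-analyticity of $(M,g)$) each $\phi_i$ would vanish identically in $\Omega$, contradicting the fact that $E_k\ne \{0\}$ and $\sum(\partial\phi_i/\partial\nu)^2\not\equiv 0$ on $\partial\Omega$ unless all $\phi_i=0$. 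Replacing $\phi_i$ by $\phi_i/\sqrt{c}$, I obtain a family in $E_k$ satisfying $\sum_{i=1}^m (\partial \phi_i/\partial \nu)^2 = 1$.

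With this normalized family at hand, condition (ii) of Lemma \ref{quadranondef} is met, hence condition (i) holds: $q_v$ is not definite on $E_k$ for any $v \in \mathcal A_0(\partial\Omega)$. The assumption $\lambda_k > \lambda_{k-1}$ or $\lambda_k < \lambda_{k+1}$ is precisely the hypothesis of Theorem \ref{quadradir}(2), whose conclusion is that $\Omega$ is critical for the $k$-th Dirichlet eigenvalue. The argument presents no real obstacle; the only subtle point is the rescaling in part (2), which is why the overdetermined system in part (1) fixes the constant to $1$, while in the sufficient condition of part (2) an arbitrary constant is allowed.
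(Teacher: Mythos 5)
Your proof is correct and follows exactly the route the paper intends: the theorem is stated as an immediate consequence of Theorem \ref{quadradir} combined with Lemma \ref{quadranondef}, and you assemble these two ingredients in precisely the same way. Your extra care with the rescaling in part (2) --- checking $c>0$ via Holmgren and then normalizing --- is a small but welcome elaboration of a step the paper leaves implicit.
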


\begin{corollary}\label{criticsimpledir}
Assume that $\lambda_k$ is simple. The domain $\Omega$ is critical
for the $k$-th eigenvalue of the Dirichlet Laplacian if and only
if the following overdetermined Pompeiu type system admits a
solution
 $$\left\{
\begin{array}{l}
\Delta {\phi} = \lambda_k {\phi}\ \hbox{in}\ \Omega \\
\\
{\phi}=0 \ \ \hbox{on} \ \partial\Omega\\
\\
|{\partial \phi\over \partial\nu}| =1 \ \ \hbox{on}\ \partial\Omega.
\end{array}
\right.$$

\end{corollary}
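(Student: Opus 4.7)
The plan is to deduce the corollary directly from Theorem~\ref{lambda_k} by exploiting the fact that simplicity of $\lambda_k$ forces the eigenspace $E_k$ to be one-dimensional, so every ``family'' of eigenfunctions in $E_k$ collapses to scalar multiples of a single normalized eigenfunction. Note also that simplicity implies $\lambda_k>\lambda_{k-1}$ (when $k\geq 2$) and $\lambda_k<\lambda_{k+1}$, so the additional hypothesis required by part (2) of Theorem~\ref{lambda_k} is automatic.

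For the ``only if'' direction, suppose $\Omega$ is critical for $\lambda_k$. By Theorem~\ref{lambda_k}(1), there exists a finite family $(\phi_i)_{i\leq m}\subset E_k$ with $\sum_{i=1}^{m}\left(\partial\phi_i/\partial\nu\right)^2=1$ on $\partial\Omega$. Since $\dim E_k=1$, one can fix any nonzero $\psi\in E_k$ and write $\phi_i=c_i\psi$ for some constants $c_i\in\mathbb{R}$. Then $\left(\sum_i c_i^2\right)\left(\partial\psi/\partial\nu\right)^2=1$ on $\partial\Omega$, so $|\partial\psi/\partial\nu|$ is a positive constant $c$ along $\partial\Omega$. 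Setting $\phi:=\psi/c$ yields an eigenfunction solving the overdetermined system with $|\partial\phi/\partial\nu|=1$.

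For the ``if'' direction, suppose $\phi$ is a solution of the overdetermined system. Then $\phi\in E_k$ (it satisfies the Dirichlet eigenvalue problem for $\lambda_k$), and the single-function family $\{\phi\}$ satisfies $\left(\partial\phi/\partial\nu\right)^2\equiv 1$, which is constant on $\partial\Omega$. Simplicity of $\lambda_k$ gives $\lambda_k>\lambda_{k-1}$ or $\lambda_k<\lambda_{k+1}$, so Theorem~\ref{lambda_k}(2), applied with $m=1$, yields that $\Omega$ is critical.

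No step here is a real obstacle, since the corollary is essentially a ``one-dimensional specialization'' of the already-proved Theorem~\ref{lambda_k}; the only minor point to mention is the rescaling argument that converts an arbitrary constant value of $|\partial\phi/\partial\nu|$ on $\partial\Omega$ into the normalization $|\partial\phi/\partial\nu|=1$, and the observation that simplicity automatically supplies the strict inequalities needed to invoke the sufficiency part of Theorem~\ref{lambda_k}.
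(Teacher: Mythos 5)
Your proof is correct and is essentially the argument the paper intends: the corollary is stated in the paper as an immediate consequence of Theorem~\ref{lambda_k} with no written proof, and your observation that a one-dimensional $E_k$ collapses the finite family to scalar multiples of a single eigenfunction (plus a rescaling to normalize $|\partial\phi/\partial\nu|$ to $1$, and the remark that simplicity automatically supplies $\lambda_k<\lambda_{k+1}$ for the sufficiency direction) is exactly the ``one-dimensional specialization'' the authors leave to the reader.
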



\subsection{Nonexistence of critical domains under metric variations}

In this paragraph, we point out the non consistency of the notion
of critical domains w.r.t. metric variations under Dirichlet boundary condition. Indeed, if $g_\varepsilon$ is an
analytic variation of the metric $g$, then we can associate to
each eigenvalue $\lambda_k$ of the Dirichlet problem in $\Omega$, analytic families
$(\Lambda_{i,\varepsilon})_{i \leq p}\subset {\mathbb R}$ and
$(\phi_{i,\varepsilon})_{i \leq p}\subset{\mathcal C}^\infty (\Omega)$
(where $p$ is the multiplicity of $\lambda_k$) satisfying,
for sufficiently small $\varepsilon$,
\begin{enumerate}
\item $\ (\phi_{i,\varepsilon})_{i \leq p}$ is $L^2(\Omega,g_\varepsilon)$
orthonormal.

\item $ \forall\ i \in \{1,\ldots,p\}$, $\Lambda_{i,o} = \lambda_k$.

\item $\forall\ i \le p$,
$\left\{
\begin{array}{l}
\Delta_{g_\varepsilon} \phi_{i,\varepsilon} = \Lambda_{i,\varepsilon} \phi_{i,\varepsilon} \;
 \hbox {in} \; \Omega\\
\\
\phi_{i,\varepsilon}=0 \; \hbox {on} \; \partial\Omega

\end{array}
\right.$ 
\end{enumerate}
Therefore $\lambda_{k,\varepsilon}$ 
admits a left sided and a right sided derivatives at
$\varepsilon=0$, and we can mimic Definition \ref{def1} to
introduce the notion of critical domain for the $k$-th eigenvalue
of Dirichlet problem w.r.t. volume-preserving
variations of the metric. Thanks to Proposition \ref{varmetric}
and using arguments similar to those used above (see also \cite{
EI2, N}), we can show that, if the domain $(\Omega, g)$ is
critical for the $k$-th eigenvalue of Dirichlet 
problem, then there exists a family of eigenfunctions
$\phi_1,\ldots,\phi_m \in E_k$ satisfying

\begin{equation}\label{8}
\sum_{i=1}^{m} d \phi_i \otimes d \phi_i = g.
\end{equation}

Now, if we consider only volume-preserving conformal variations
$g_\varepsilon$ of $g$ (that is $g_\varepsilon
=\alpha_\varepsilon\ g$ with $\int_{\Omega}
\alpha_\varepsilon^{n\over 2} v_g = Vol (\Omega,g))$, then the
necessary condition (\ref{8}) for $(\Omega,g)$ to be critical
w.r.t such variations becomes $\displaystyle\sum_{i=1}^{m}
\phi^2_i=1$ in $\Omega$. As the eigenfunctions of the Dirichlet
Laplacian vanish on the boundary $\partial \Omega$, this last
condition can never be fulfilled by functions of $E_k$. Thus, we
have the following:

\begin{proposition}\label{nonexistdir} There is no critical domain $(\Omega,g)$ for
the $k$-th eigenvalue of the Dirichlet Laplacian under conformal
volume-preserving variations of the metric $g$.
\end{proposition}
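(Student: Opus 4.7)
The plan is to justify the claim made in the paragraph preceding the proposition---namely that criticality for conformal volume-preserving variations forces the existence of $\phi_{1},\ldots,\phi_{m}\in E_{k}$ with $\sum_{i=1}^{m}\phi_{i}^{2}=1$ on $\overline{\Omega}$---and then observe that this identity is incompatible with the Dirichlet boundary condition.

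First I would specialize Proposition \ref{varmetric} to conformal variations $g_{\varepsilon}=\alpha_{\varepsilon}g$. Differentiating the normalization $\int_{\Omega}\alpha_{\varepsilon}^{n/2}\,v_{g}=vol(\Omega,g)$ at $\varepsilon=0$ yields $\int_{\Omega}\alpha'\,v_{g}=0$, and the infinitesimal metric variation is $h=\alpha'g$. Using $\langle d\phi\otimes d\phi,\alpha'g\rangle=\alpha'|d\phi|^{2}$, $\langle g,g\rangle=n$, the eigenvalue equation $\Delta\phi=\lambda_{k}\phi$, and Green's identity $\int_{\Omega}\alpha'\Delta\phi^{2}\,v_{g}=\int_{\Omega}\phi^{2}\Delta\alpha'\,v_{g}$ (clean because both $\phi^{2}$ and $\partial\phi^{2}/\partial\nu=2\phi\,\partial\phi/\partial\nu$ vanish on $\partial\Omega$), the quadratic form on $E_{k}$ arising from the metric analog of Lemma \ref{digonalquadra} simplifies to
$$
q_{\alpha'}(\phi)=-\int_{\Omega}\phi^{2}\Bigl(\lambda_{k}\alpha'+\tfrac{n-2}{4}\Delta\alpha'\Bigr)\,v_{g}.
$$
Criticality is then the assertion that $q_{\alpha'}$ is not definite on $E_{k}$ for every $\alpha'$ satisfying $\int_{\Omega}\alpha'\,v_{g}=0$.

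Next I would run the Hahn-Banach argument of Lemma \ref{quadranondef}. Let $K$ be the convex hull of $\{\phi^{2}:\phi\in E_{k}\}$ in $C^{\infty}(\overline{\Omega})$. If $1\notin K$, a separating functional---representable, after subtraction of its mean, as integration against a smooth function $\psi$ with $\int_{\Omega}\psi\,v_{g}=0$---produces, by solving the elliptic equation $\lambda_{k}\alpha'+\tfrac{n-2}{4}\Delta\alpha'=-\psi$ with the side constraint $\int_{\Omega}\alpha'\,v_{g}=0$, an admissible $\alpha'$ for which $q_{\alpha'}$ becomes positive definite on $E_{k}$, contradicting criticality. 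Hence $1\in K$, so there exist $\phi_{1},\ldots,\phi_{m}\in E_{k}$ with $\sum_{i=1}^{m}\phi_{i}^{2}=1$ on $\overline{\Omega}$.

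The proposition now follows immediately: every $\phi_{i}\in E_{k}$ satisfies $\phi_{i}|_{\partial\Omega}=0$, so $\sum_{i}\phi_{i}^{2}\equiv 0$ on $\partial\Omega$, contradicting the identity $\sum_{i}\phi_{i}^{2}=1$ there. I expect the principal technical obstacle to be the Hahn-Banach step---specifically, verifying that the separating functional can actually be realized as $\lambda_{k}\alpha'+\tfrac{n-2}{4}\Delta\alpha'$ for some $\alpha'$ of vanishing mean. This is an elliptic solvability argument in the spirit of \cite{N, EI2}; once in place, the contradiction with the Dirichlet condition is trivial.
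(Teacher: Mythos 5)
Your computation of the conformal quadratic form, $q_{\alpha'}(\phi)=-\int_{\Omega}\phi^{2}\bigl(\lambda_{k}\alpha'+\tfrac{n-2}{4}\Delta\alpha'\bigr)v_{g}$, is correct (the boundary terms in Green's identity do vanish because $\phi=\partial_{\nu}\phi^{2}=0$ on $\partial\Omega$), and the overall plan---Hahn--Banach to get a pointwise overdetermined condition, then contradict it on $\partial\Omega$---is in the spirit the paper intends. However, the step you single out as the ``principal technical obstacle'' is a genuine gap, not a routine elliptic solvability lemma. The reason is precisely that you integrated by parts first: your argument needs, given the Hahn--Banach separator $\psi$, a \emph{zero-mean} $\alpha'$ with $\lambda_{k}\alpha'+\tfrac{n-2}{4}\Delta\alpha'=-\psi$; but on a domain with boundary this operator does not map $\{\int\alpha'=0\}$ into $\{\int\psi=0\}$ (because $\int_{\Omega}\Delta\alpha'\,v_{g}$ is a boundary flux, not zero), so the closed-manifold arguments of \cite{N,EI2} you appeal to do not transfer, and one has to argue about solvability with free boundary data plus the mean constraint. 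This can in fact be pushed through when $(M,g)$ is real analytic (using the freedom in the boundary trace of $\alpha'$ and a Holmgren/unique-continuation input), but your outline does not supply it.

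The cleaner route, and the one consistent with the paper's framework, is to \emph{not} integrate by parts: work with
\begin{equation*}
q_{\alpha'}(\phi)=-\int_{\Omega}\alpha'\Bigl[\bigl(1-\tfrac{n}{2}\bigr)|\nabla\phi|^{2}+\tfrac{n\lambda_{k}}{2}\phi^{2}\Bigr]v_{g},
\end{equation*}
which is a direct pairing of $\alpha'$ against a smooth function. Then the Hahn--Banach argument of Lemma \ref{quadranondef} applies verbatim (using $\int_{\Omega}\bigl[(1-\tfrac{n}{2})|\nabla\phi|^{2}+\tfrac{n\lambda_{k}}{2}\phi^{2}\bigr]v_{g}=\lambda_{k}\|\phi\|^{2}>0$ in place of the positivity of $\int_{\partial\Omega}(\partial_{\nu}\phi)^{2}$), with no elliptic solvability step, and yields $\phi_{1},\dots,\phi_{m}\in E_{k}$ with $\sum_{i}\bigl[(1-\tfrac{n}{2})|\nabla\phi_{i}|^{2}+\tfrac{n\lambda_{k}}{2}\phi_{i}^{2}\bigr]\equiv 1$ in $\Omega$. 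Evaluating on $\partial\Omega$, where $\phi_{i}=0$ and $|\nabla\phi_{i}|=|\partial_{\nu}\phi_{i}|$, gives $(1-\tfrac{n}{2})\sum_{i}(\partial_{\nu}\phi_{i})^{2}=1$, which is impossible for $n\ge 2$ since the left-hand side is $\le 0$. Note also that the identity $\sum_{i}\phi_{i}^{2}\equiv 1$ you (and the paper's preamble) aim for is the closed-manifold statement; on a bounded domain the natural Hahn--Banach output is the displayed combination, which is equivalent to $\lambda_{k}\sum\phi_{i}^{2}+\tfrac{n-2}{4}\Delta\bigl(\sum\phi_{i}^{2}\bigr)\equiv 1$, not $\sum\phi_{i}^{2}\equiv 1$, except when $n=2$.
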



\section{Applications to the trace of the heat kernel }

This section deals with critical domains of the trace of the
heat kernel under Dirichlet boundary condition.

Recall that the heat kernel $H$ of $(\Omega,g)$ under the
Dirichlet boundary condition is defined 
to be the solution of the following parabolic problem:

$$\left\{
\begin{array}{l}
({\partial\over \partial t } - \Delta_y) H(t,x,y) =0\\
\\
H(0,x,y)=\delta_x\\
\\
\forall y\in \partial \Omega, \; H(t,x,y) =0 
\end{array}
\right.$$
Its trace is the function
$$Y(t) = \int_\Omega H(t,x,x) v_g$$
The relationship between this kernel and the spectrum of the Dirichlet
Laplacian is given by
$$ H(t,x,y)=\sum_{k\ge 1} e^{- \lambda_k t} \phi_k (x)\phi_k (y)$$
where $(\phi)_{k\ge1}$ is an $L^2(\Omega,
g)$-orthonormal family of eigenfunctions satisfying

$$\left\{
\begin{array}{l}
\Delta \phi_k=\lambda_k \phi_k\;\; \hbox{in}\; \Omega\\
\\
\phi_k =0 \;\; \hbox{on}\; \partial \Omega\\
\end{array}
\right.$$\\
and then,

\medskip

\begin{equation}\label{9}
 Y(t) = \sum_{k\ge 1} e^{- \lambda_k t}.
\end{equation}

\medskip

Let $\Omega_\varepsilon$ be a smooth deformation of $\Omega$ and
let $Y_\varepsilon (t) = \sum_{k\ge 1} e^{-
\lambda_{k,\varepsilon} t} $ be the corresponding heat trace
function. Unlike the eigenvalues, the function $Y_\varepsilon
(t)$ is always differentiable in
$\varepsilon$ and {\it the domain $\Omega$ will be said critical
for the trace of the heat kernel under the Dirichlet boundary condition at time $t$ if, for any volume-preserving deformation $\Omega_\varepsilon$ of $\Omega$, we have}
$${d\over {d \varepsilon }} Y_\varepsilon (t)\big| _{\varepsilon=0} =0$$
From the results of section 3 above, one can deduce the variation
formula for the heat trace. For this, we need to introduce the
mixed second derivative $d_SH(t)|_x$ of $H$ at the point $x$
defined as the smooth 2-tensor given by
$$d_SH(t)|_x(X,X) = {\partial^2 \over {\partial\alpha\partial\beta}}
H(t,c(\alpha),c(\beta))\big|_{\alpha=\beta=0},$$
where $c$ is a curve in $\Omega$ such that $c(0)=x$ and
$\dot{c}(0)=X$. It is easy to check that
$$d_SH(t)= \sum_{k\ge1}e^{- \lambda_k t}d\phi_k \otimes d\phi_k$$
\begin{theorem}\label{varnoy}
Let $\Omega_\varepsilon=f_\varepsilon (\Omega)$ be a volume-preserving deformation of $\Omega$. We have, $\forall t>0$,
$${d\over {d \varepsilon }} Y_\varepsilon (t)\big| _{\varepsilon=0}
=-t\int_{\partial\Omega} v\,  d_SH(t)(\nu,\nu) v_{\bar g} ={t\over
2} \int_{\partial\Omega} v \, \Delta H(t,x,x) v_{\bar g}$$
where $v=g({d\over {d\varepsilon}}f_\varepsilon \big|
_{\varepsilon =0},\nu)$.
\end {theorem}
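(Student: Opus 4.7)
The plan is to combine the spectral expansion $Y_\varepsilon(t)=\sum_{k\ge 1}e^{-\lambda_{k,\varepsilon}t}$ with the Hadamard variation formula of Corollary \ref{vardir}, taking care of the fact that the individual eigenvalues $\lambda_{k,\varepsilon}$ are only one-sided differentiable in $\varepsilon$. To bypass this obstruction I would first replace the enumeration by $k$ with the analytic branches $(\Lambda_{j,\varepsilon})_{j\ge 1}$ furnished by Lemma \ref{perturbation} applied to each distinct eigenvalue of $\Omega$; listed with multiplicity these yield, for $\varepsilon$ in a sufficiently small neighbourhood of $0$,
\[
Y_\varepsilon(t)=\sum_{j\ge 1}e^{-\Lambda_{j,\varepsilon}t},
\]
a sum of analytic functions of $\varepsilon$, for which the differentiability issue disappears term by term.

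I would then differentiate termwise at $\varepsilon=0$ and plug in the Hadamard formula of Corollary \ref{vardir} applied to each branch $(\Lambda_{j,\varepsilon},\phi_{j,\varepsilon})$, obtaining, after exchanging sum and boundary integral,
\[
\frac{d}{d\varepsilon}Y_\varepsilon(t)\big|_{\varepsilon=0}
=-t\int_{\partial\Omega}v\,\Bigl(\sum_{j\ge 1}e^{-\lambda_j t}\bigl(\tfrac{\partial\phi_{j,0}}{\partial\nu}\bigr)^2\Bigr)v_{\bar g}.
\]
The quantity inside the parentheses is exactly $d_SH(t)(\nu,\nu)$: within each eigenspace, the sum $\sum_j(\partial\phi_{j,0}/\partial\nu)^2$ is the restriction to $\nu\otimes\nu$ of the kernel of the spectral projector, hence independent of the chosen orthonormal basis, and summing with weights $e^{-\lambda_j t}$ reproduces the series representation of $d_SH(t)$ recalled before the theorem. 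For the second equality I would use the pointwise identity $\Delta(u^2)=2u\Delta u-2|\nabla u|^2$ with $u=\phi_k$: since $\phi_k=0$ on $\partial\Omega$ and $\Delta\phi_k=\lambda_k\phi_k$, this reduces on the boundary to $\Delta(\phi_k^2)=-2(\partial\phi_k/\partial\nu)^2$; summing against $e^{-\lambda_k t}$ gives $\Delta_x H(t,x,x)\big|_{\partial\Omega}=-2\,d_SH(t)(\nu,\nu)$, which converts the first expression into the second.

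The main technical obstacle is the justification of the interchange of the infinite summation with both the $\varepsilon$-differentiation and the $\partial\Omega$-integration. To handle it I would produce a summable majorant uniform for $\varepsilon$ in a small neighbourhood of $0$: Weyl's law applied to the pulled-back metrics $g_\varepsilon=f_\varepsilon^*g$, combined with continuity of the Dirichlet spectrum under the deformation, provides $\Lambda_{j,\varepsilon}\ge c\,j^{2/n}$ uniformly in $\varepsilon$; meanwhile, the Hadamard formula of Corollary \ref{vardir} together with standard elliptic estimates on the normal derivative of a Dirichlet eigenfunction yields $|\Lambda'_{j,\varepsilon}|\le C\|v\|_\infty\Lambda_{j,\varepsilon}$. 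Therefore $\bigl|\tfrac{d}{d\varepsilon}e^{-\Lambda_{j,\varepsilon}t}\bigr|\le Ct\,\Lambda_{j,\varepsilon}e^{-\Lambda_{j,\varepsilon}t}$ is dominated by a convergent series independent of $\varepsilon$ for any fixed $t>0$, and a routine dominated-convergence argument makes the termwise operations rigorous and completes the proof.
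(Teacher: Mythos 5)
Your proposal takes essentially the same route as the paper's (second) argument, which applies the Hadamard formula of Corollary~\ref{vardir} termwise to the spectral expansion $Y_\varepsilon(t)=\sum_k e^{-\lambda_{k,\varepsilon}t}$ and then identifies $\sum_k e^{-\lambda_k t}(\partial\phi_k/\partial\nu)^2$ with $d_SH(t)(\nu,\nu)$. Where you go beyond the paper is in supplying the justifications that the authors compress into ``an immediate consequence of Hadamard's type formula\ldots thanks to the relation (9)'': (i) you replace the non-smooth enumeration $\lambda_{k,\varepsilon}$ by the analytic Rellich--Kato branches $\Lambda_{j,\varepsilon}$ of Lemma~\ref{perturbation}, and you observe (correctly, and this is a point the paper does not address) that the spectral projector makes $\sum_j(\partial\phi_{j,0}/\partial\nu)^2$ basis-independent within each eigenspace, so the branch-basis at $\varepsilon=0$ reproduces the heat-kernel basis; (ii) you furnish a dominated-convergence argument, with a uniform Weyl lower bound and a polynomial bound $|\Lambda'_{j,\varepsilon}|\lesssim \Lambda_{j,\varepsilon}^{\,c}$ from the Hadamard formula plus elliptic regularity and trace estimates, to legitimize the interchange of $\sum$ with $d/d\varepsilon$ and with the boundary integral; (iii) you give an explicit derivation of the second equality from $\Delta(\phi_k^2)=2\phi_k\Delta\phi_k-2|\nabla\phi_k|^2$ restricted to $\partial\Omega$, whereas the paper simply states it. The paper also offers an alternative proof by invoking the Ray--Singer first variation formula for the heat kernel, which you do not pursue; your version is the more self-contained one and aligns with the paper's preferred derivation. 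One minor quibble: the bound you state, $|\Lambda'_{j,\varepsilon}|\le C\|v\|_\infty\Lambda_{j,\varepsilon}$, is a Rellich--Pohozaev-type estimate that needs a short justification on a general Riemannian manifold (the naive trace/elliptic estimate gives the slightly weaker $\lesssim\Lambda_{j,\varepsilon}^2$), but any polynomial bound suffices for the domination against $e^{-\Lambda_{j,\varepsilon}t}$, so this does not affect the conclusion.
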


\begin{proof}
The proof can be derived from the first variation formula of the
heat kernel that can be found in the paper of Ray
and Singer \cite[Proposition 6.1]{RS}. Nevertheless, at least in the case where the ambient manifold is real analytic, the formula of Theorem \ref{varnoy} can be obtained as an immediate consequence of Hadamard's type formula of Section 2,  thanks to the relation
(\ref{9}) above. Indeed, in this manner we obtain,
$\forall t>0$,

$${d\over {d \varepsilon }} Y_\varepsilon (t)\big|_{\varepsilon=0}
= -t \sum_{k\ge1}e^{- \lambda_k t}\int_{\partial\Omega} v
\left({\partial\phi_k\over{\partial\nu}}\right)^2v_{\bar g} $$
where $(\lambda_k,\phi_k)$ are as above. To
get the desired formula for $Y_\varepsilon (t)$ it suffices to
notice that
$$ d_SH(t)(\nu,\nu)=\sum_{k\ge1}e^{- \lambda_k t}d\phi_k \otimes
d\phi_k (\nu,\nu) =\sum_{k\ge1}e^{- \lambda_k t}
\left( {\partial\phi_k\over{\partial\nu}} \right) ^2.$$

\end{proof}

An immediate consequence is the following

\begin{corollary}\label{criticH}
The following conditions are equivalent:

\begin{itemize}
\item [(i)] The domain $\Omega$ is critical for the trace of the
Dirichlet heat kernel at the time $t$ under volume-preserving domain deformations, 
\item [(ii)] $\Delta H(t,x,x) $ is constant on the boundary
$\partial\Omega$, 
\item [(iii)] for any positive integer $k$ and
any $L^2(\Omega, g)$-orthonormal basis $\phi_1, \cdots \phi_p$ of
the eigenspace $E_k$ of $\lambda_k$, $\sum_{i\le p}
\left({\partial\phi_i\over{\partial\nu}}\right)^2$ is constant on $\partial\Omega$.
\end{itemize}

\end{corollary}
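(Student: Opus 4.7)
The plan is to obtain the three-way equivalence by combining Theorem \ref{varnoy} with a simple pointwise computation of $\Delta H(t,x,x)$ along $\partial\Omega$.

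For (i)$\Leftrightarrow$(ii), I would use Lemma \ref{defconstruct}, which tells us that the normal components of analytic volume-preserving deformations sweep out precisely the space $\mathcal A_0(\partial\Omega)$ of zero-mean regular functions on $\partial\Omega$. Theorem \ref{varnoy} then shows that $\Omega$ is critical at time $t$ if and only if $\int_{\partial\Omega} v\,\Delta H(t,x,x)\,v_{\bar g}=0$ for every $v\in\mathcal A_0(\partial\Omega)$. Since the $L^2(\partial\Omega,\bar g)$-orthogonal complement of $\mathcal A_0(\partial\Omega)$ is the one-dimensional space of constants, this vanishing condition is equivalent to $\Delta H(t,\cdot,\cdot)|_{\partial\Omega}$ being constant, i.e.\ to condition (ii).

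To link (ii) with (iii), I would next establish the pointwise identity
$$\Delta H(t,x,x)\big|_{\partial\Omega}=-2\sum_k e^{-\lambda_k t}\left(\frac{\partial \phi_k}{\partial\nu}\right)^2=-2\sum_{\lambda}e^{-\lambda t}\,F_\lambda(x),$$
where $F_\lambda(x):=\sum_{\phi\in B(E_\lambda)}(\partial\phi/\partial\nu)^2$ is independent of the choice of $L^2$-orthonormal basis $B(E_\lambda)$ of $E_\lambda$. This comes from differentiating the expansion $H(t,x,x)=\sum_k e^{-\lambda_k t}\phi_k(x)^2$ termwise and applying $\Delta(\phi_k^2)=2\lambda_k\phi_k^2-2|\nabla\phi_k|^2$, which on $\partial\Omega$ reduces to $-2(\partial\phi_k/\partial\nu)^2$ since $\phi_k$ vanishes there; the basis-independence of $F_\lambda$ is standard, as a unitary change of basis within $E_\lambda$ multiplies the boundary vector of normal derivatives by an orthogonal matrix and so preserves the squared norm. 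The implication (iii)$\Rightarrow$(ii) is then immediate.

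The hardest part will be (ii)$\Rightarrow$(iii): from $\sum_{\lambda}e^{-\lambda t}[F_\lambda(x)-F_\lambda(y)]=0$ one must conclude $F_\lambda(x)=F_\lambda(y)$ for every eigenvalue $\lambda$ and any two points $x,y\in\partial\Omega$. I would invoke the uniqueness of coefficients in a Dirichlet series in $t$ with pairwise distinct exponents; this is the natural reading of the corollary in view of the preceding text on criticality ``at any time $t>0$'', and it uses that (ii) is understood to hold on a set of values of $t$ with an accumulation point. For a single isolated $t$ no purely algebraic separation of the nonnegative summands $e^{-\lambda t}F_\lambda(x)$ would be possible, so this is really where the Dirichlet-series structure of the heat trace enters.
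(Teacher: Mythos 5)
Your proof is correct and follows what the paper has in mind — the paper itself gives no argument, merely calling the corollary ``an immediate consequence'' of Theorem \ref{varnoy}. The chain (i)$\Leftrightarrow$(ii) via Lemma \ref{defconstruct} together with the fact that the $L^2(\partial\Omega,\bar g)$-orthogonal complement of $\mathcal A_0(\partial\Omega)$ is the constants is exactly right, as is your pointwise identity $\Delta H(t,x,x)\big|_{\partial\Omega}=-2\sum_{\lambda}e^{-\lambda t}F_\lambda(x)$ (which the paper already asserts implicitly through the second equality of Theorem \ref{varnoy}), giving (iii)$\Rightarrow$(ii) at once.

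Your caveat about (ii)$\Rightarrow$(iii) is a genuine observation, not mere caution: at a single isolated value of $t$ there is no way to separate the nonnegative summands of the Dirichlet series, and the individual functions $F_\lambda$ need not be constant even when one particular weighted sum $\sum_\lambda e^{-\lambda t}F_\lambda$ is. So the three-way equivalence as literally phrased — with conditions (i) and (ii) referring to a fixed $t$ while (iii) is $t$-independent — is imprecise; the intended reading, consistent with the phrase ``at any time $t>0$'' used shortly afterwards in Theorem \ref{hconst}, is that (i) and (ii) should hold for all $t>0$ (or at least for $t$ in a set with an accumulation point), whereupon the uniqueness of the coefficients of a convergent Dirichlet series with pairwise distinct exponents finishes the implication exactly as you describe. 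Read with that interpretation, your proof is complete, and you have correctly identified the one place where the paper's ``immediate'' hides an actual step.
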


Recall that if $\rho$ is an isometry of $(\Omega,g)$, then,
$\forall x \in \Omega$ and $\forall t>0$, $H(t,\rho(x), \rho
(x))=H(t,x,x)$. In
particular, if $\Omega$ is a ball of ${\mathbb R}^n$ endowed with
a rotationally symmetric Riemannian metric $g$ given in polar
coordinates by $g= a^2(r) dr^2 + b^2(r) d\sigma^2$, where
$d\sigma^2$ is the standard metric of the unit sphere ${\mathbb
S}^{n-1}$, then $H(t,x,x)$ should be radial (that
is depend only on the parameter $r$). Therefore, the function
$\Delta H(t,x,x)$ is also radial and then
it is constant on the boundary of the ball.

\begin{corollary}\label{ballsym}
Let $g$ be a a rotationally symmetric Riemannian metric on ${\mathbb R}^n$. The geodesic balls centered at the origin are critical domains for the trace of the
Dirichlet heat kernel under volume-preserving
domain deformations.
\end {corollary}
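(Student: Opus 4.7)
The plan is to reduce the problem to Corollary \ref{criticH}, which says that a domain $\Omega$ is critical for the trace of the Dirichlet heat kernel at time $t$ if and only if the function $x\mapsto \Delta H(t,x,x)$ is constant along $\partial\Omega$. Thus, given a geodesic ball $B_R$ centered at the origin, I will show that $\Delta H(t,x,x)$ is radial, hence automatically constant on the geodesic sphere $\partial B_R$, for every $t>0$.

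First I would exploit the symmetry of the ambient metric. Because $g=a^2(r)\,dr^2 + b^2(r)\,d\sigma^2$ is invariant under the action of the orthogonal group $O(n)$, every $\rho\in O(n)$ is an isometry of $(\mathbb{R}^n,g)$ which preserves $B_R$ (as $\rho$ preserves the $g$-distance to the origin). By the uniqueness of the solution of the Dirichlet heat problem, the heat kernel satisfies $H(t,\rho(x),\rho(y))=H(t,x,y)$ for all $\rho\in O(n)$, $x,y\in B_R$ and $t>0$; specialising to $x=y$ gives the invariance of $x\mapsto H(t,x,x)$ under $O(n)$. Since $O(n)$ acts transitively on each geodesic sphere centered at the origin, the function $x\mapsto H(t,x,x)$ depends only on $r=r(x)$, that is, it is radial.

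Next I would observe that the Laplacian of a radial function is again radial: in the polar coordinate expression of $\Delta_g$, only derivatives with respect to $r$ survive when applied to a radial function, so the result is a function of $r$ alone. Consequently $x\mapsto \Delta H(t,x,x)$ is radial, and in particular constant along $\partial B_R=\{r=R\}$ for every $t>0$. Applying Corollary \ref{criticH} yields that $B_R$ is critical for the trace of the Dirichlet heat kernel under volume-preserving deformations at every $t>0$, which is the desired conclusion.

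There is no real obstacle here: the only point that deserves a brief verification is the isometry invariance of the heat kernel, which is immediate from uniqueness of the Dirichlet heat equation (or equivalently from the spectral decomposition $H(t,x,y)=\sum_k e^{-\lambda_k t}\phi_k(x)\phi_k(y)$ upon choosing an $O(n)$-adapted orthonormal basis of each eigenspace). Everything else is a direct consequence of the symmetry of the situation and of the characterisation already established in Corollary \ref{criticH}.
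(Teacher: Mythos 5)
Your proposal is correct and follows essentially the same route as the paper: invoke the isometry invariance of the Dirichlet heat kernel, deduce that $x\mapsto H(t,x,x)$ (hence $\Delta H(t,x,x)$) is radial when the metric and the ball are rotationally symmetric, and conclude via Corollary \ref{criticH}. The paper's own argument, stated in the paragraph preceding the corollary, is a compressed version of exactly this reasoning.
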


In particular, geodesic balls of Riemannian space forms are
critical for the trace of the Dirichlet heat
kernel under volume-preserving domain deformations.

The Minakshisundaram-Pleijel asymptotic expansion of the trace of
the heat kernel can also informs us about the geometric properties
of extremal or critical domains. Indeed, it is well known that
there exists a sequence $(a_{i})_{i \in {\mathbb N} }$ of real numbers such that for sufficiently small $t>0$,
we have:

$$ Y(t)= (4\pi t)^{- n \over 2} \sum_{k \geq 0}
a_{k}t^{k \over 2}$$ with (see for instance
\cite{BBG, BG}):
$$ a_{0}= vol(\Omega ,g),$$
$$a_{1}= -\frac {\sqrt{\pi}}{2} vol(\partial \Omega,\bar g),$$
$$a_{2}= {1 \over 6} \left\{ \int_\Omega scal_{g} v_{g}
+2 \int _{\partial \Omega} tr A\, v_{\bar g} \right \},$$
$$a_{3}= {\sqrt \pi \over 192 } \left\{ \int_{\partial \Omega}
\left ( -16 scal_{g} -7 (trA)^{2}+10 |A|^{2}+8 \rho_{g} (\nu,\nu)
\right ) v_{\bar g} \right\},$$
where $scal_g$ and $\rho_g $ are
respectively the scalar and the Ricci curvatures of $(\Omega,g)$,
$A$ is the shape operator of the boundary $\partial \Omega$ (i.e
$\forall X \in T\partial\Omega, \, A(X)=D_{X}\nu$) and $tr A$ is
the trace of $A$ (i.e $(n-1)$-times the mean curvature of
$\partial\Omega$).

An immediate consequence of these formulae is the following: 
Suppose that for any domain $\Omega'$ having the same volume as
$\Omega$, we have $Y_{\Omega'}(t)\leq
Y_{\Omega}(t)$,$\forall t>0$, then
$vol\,\partial\Omega' \geq vol \,\partial \Omega$. Consequently, we
have

\begin{proposition}\label{isoper}
If the domain $\Omega$ maximizes $Y$ at any time $t>0$ among
all the domains of the same volume, then $\Omega$ is a solution of
the isoperimetric problem in $(M,g)$, that is, $\forall \Omega'
\subset M$ such that $vol\Omega=vol\Omega'$, we have $vol\partial
\Omega' \geq vol \partial \Omega$.

\end{proposition}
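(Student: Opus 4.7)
The plan is to compare the short-time asymptotic expansions of $Y_\Omega(t)$ and $Y_{\Omega'}(t)$ provided by Minakshisundaram--Pleijel. The hypothesis $Y_{\Omega'}(t)\leq Y_\Omega(t)$ holds for all $t>0$, so in particular it can be exploited in the limit $t\to 0^+$, where the asymptotic expansion recalled just before the proposition gives precise control on the first two coefficients in terms of volume and boundary volume respectively.

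Concretely, I would first write, for both $\Omega$ and $\Omega'$ (which I assume to be regular enough for the expansion to apply), the formula
$$Y_{\Omega}(t) = (4\pi t)^{-n/2}\left(a_0(\Omega) + a_1(\Omega)\, t^{1/2} + O(t)\right)$$
as $t\to 0^+$, and similarly for $\Omega'$, where $a_0(\cdot)=\mathrm{vol}(\cdot,g)$ and $a_1(\cdot)=-\frac{\sqrt{\pi}}{2}\mathrm{vol}(\partial\,\cdot,\bar g)$. Since by hypothesis $\mathrm{vol}(\Omega)=\mathrm{vol}(\Omega')$, the leading terms $a_0(\Omega)$ and $a_0(\Omega')$ cancel when one forms the difference $Y_\Omega(t)-Y_{\Omega'}(t)$.

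The second step is to rearrange the inequality $Y_{\Omega'}(t)\leq Y_\Omega(t)$ as
$$(4\pi t)^{-n/2}\bigl(a_1(\Omega')-a_1(\Omega)\bigr)t^{1/2}+O\bigl((4\pi t)^{-n/2}\, t\bigr)\leq 0,$$
multiply through by the positive factor $(4\pi t)^{n/2}t^{-1/2}$, and let $t\to 0^+$. This yields $a_1(\Omega')\leq a_1(\Omega)$, which, in view of the sign in the definition of $a_1$, is exactly $\mathrm{vol}(\partial\Omega',\bar g)\geq \mathrm{vol}(\partial\Omega,\bar g)$. Since $\Omega'$ was an arbitrary competitor of the same volume, this is the claimed isoperimetric inequality.

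I do not anticipate a serious obstacle in the argument: the only delicate point is making sure the Minakshisundaram--Pleijel expansion is available for the test domain $\Omega'$ (so one works in the class of regular domains for which $a_0$ and $a_1$ are well defined), and that the error term is uniform enough for the limit $t\to 0^+$ to be taken. Both are standard once $\Omega'$ has smooth boundary, which is consistent with the regularity hypothesis maintained throughout the paper.
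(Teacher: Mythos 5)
Your argument is correct and is exactly the one the paper intends: the proposition is stated as an immediate consequence of the Minakshisundaram--Pleijel expansion, and the sentence in the paper immediately preceding it ("Suppose that for any domain $\Omega'$ having the same volume as $\Omega$, we have $Y_{\Omega'}(t)\leq Y_{\Omega}(t)$, $\forall t>0$, then $vol\,\partial\Omega' \geq vol\,\partial\Omega$") encapsulates precisely the comparison of the $a_0$ and $a_1$ coefficients that you carry out. Your write-up simply makes the limiting argument $t\to 0^+$ explicit, which matches the paper's reasoning in both substance and spirit.
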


Another consequence of the Minakshisundaram-Pleijel asymptotic
expansion is the following

\begin{theorem}\label{hconst}
If the domain $\Omega$ is a critical domain of the trace of the Dirichlet
heat kernel at any time $t>0$, then $\partial \Omega$ has constant
mean curvature. If in addition the Ricci curvature (resp. the
sectional curvature) of the ambient space $(M,g)$ is constant in a
neighborhood of $\Omega$, then $tr(A^{2})$ (resp. $tr(A^{3})$) is
constant on $\partial\Omega$.
\end{theorem}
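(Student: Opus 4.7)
The plan is to combine the first-variation formula of Theorem \ref{varnoy} with the Minakshisundaram--Pleijel short-time expansion of the heat trace. If $\Omega$ is critical for $Y$ at every $t>0$, then for every analytic volume-preserving deformation $\Omega_\varepsilon=f_\varepsilon(\Omega)$ the function $\dot Y(t):=\frac{d}{d\varepsilon}Y_\varepsilon(t)|_{\varepsilon=0}$ vanishes identically on $(0,\infty)$. Differentiating the expansion $Y_\varepsilon(t)\sim(4\pi t)^{-n/2}\sum_{k\ge 0}a_k(\varepsilon)\,t^{k/2}$ term by term in $\varepsilon$ (justified by the smooth dependence of the M--P coefficients on the domain) and invoking uniqueness of asymptotic expansions, one concludes $\dot a_k:=\frac{d}{d\varepsilon}a_k(\Omega_\varepsilon)|_{\varepsilon=0}=0$ for every $k\ge 0$. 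This reduces the theorem to extracting geometric consequences from the vanishing of a few $\dot a_k$.

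For the unconditional claim I would read off $\dot a_1=0$. Since $a_1=-\tfrac{\sqrt\pi}{2}vol(\partial\Omega)$ and the classical first variation of area is
$$\frac{d}{d\varepsilon}vol(\partial\Omega_\varepsilon)\Big|_{\varepsilon=0}=\int_{\partial\Omega}(tr\,A)\,V\,v_{\bar g},\qquad V:=g\bigl(\tfrac{d}{d\varepsilon}f_\varepsilon|_{\varepsilon=0},\nu\bigr),$$
and since Lemma \ref{defconstruct} realizes every $V\in\mathcal A_0(\partial\Omega)$ as the normal component of such a deformation, the vanishing of the integral for all such $V$ forces $tr\,A$ to be constant on $\partial\Omega$.

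Assume now that the Ricci tensor of $(M,g)$ is constant in a neighborhood of $\Omega$; then both $scal_g$ and $\rho_g(\nu,\nu)$ are spatial constants on $\partial\Omega_\varepsilon$ for all small $\varepsilon$. Combined with the constancy of $tr\,A$ just proved, this reduces $a_3$ to a constant multiple of $vol(\partial\Omega_\varepsilon)$ (whose $\varepsilon$-derivative already vanishes) plus $\tfrac{10\sqrt\pi}{192}\int_{\partial\Omega_\varepsilon}|A|^2\,v_{\bar g_\varepsilon}$, so that $\dot a_3=0$ becomes
$$\frac{d}{d\varepsilon}\int_{\partial\Omega_\varepsilon}|A|^2\,v_{\bar g_\varepsilon}\Big|_{\varepsilon=0}=0\qquad\forall\,V\in\mathcal A_0(\partial\Omega).$$
Using the standard variation formulas for the shape operator and the boundary volume element, integrating by parts on the closed manifold $\partial\Omega$, and invoking Codazzi together with the constancy of $tr\,A$ and the constant Ricci hypothesis, this first variation can be rewritten as $\int_{\partial\Omega}Q\,V\,v_{\bar g}$ for a local scalar $Q$ whose only non-constant ingredient is $|A|^2=tr(A^2)$. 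Its constancy then yields the second assertion. For the sectional-curvature case, the same scheme applied to the next M--P coefficient gives the statement about $tr(A^3)$, the constant-sectional-curvature hypothesis ensuring that after using the already established constancy of $tr\,A$ and $tr(A^2)$ the residual variable content of the integrand is exactly $tr(A^3)$.

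The main obstacle is executing cleanly the identification of $Q$ in the third paragraph: the raw variation of $\int|A|^2$ contains contributions from $tr(A^3)$, from tangential Laplacians of $|A|^2$, and from ambient curvature, and one must check that under the hypotheses each of these either integrates to zero against $V\in\mathcal A_0$, or is itself a constant, leaving only $tr(A^2)$. This is where the constant-Ricci (or constant-sectional-curvature) assumption enters decisively. The underlying computations are standard but combinatorially laborious, relying on the variation formulas for the second fundamental form, the Gauss--Codazzi identities, and integration by parts on the closed submanifold $\partial\Omega$.
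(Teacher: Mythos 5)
The first claim ($tr\,A$ constant) is handled exactly as in the paper: differentiate $a_1 = -\frac{\sqrt\pi}{2}\,vol(\partial\Omega)$ using the classical first-variation of area and invoke Lemma~\ref{defconstruct}. That part is fine.

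The gap is in the step for $tr(A^2)$. You jump directly to $a_3$, asserting that the constancy of $tr\,A$ makes the $-7(tr\,A_\varepsilon)^2$ contribution to $a_3$ equal to a constant times $vol(\partial\Omega_\varepsilon)$. This is false: $tr\,A$ is only known to be constant on $\partial\Omega$ at $\varepsilon=0$, not on $\partial\Omega_\varepsilon$ for $\varepsilon\neq 0$, and the first variation of $\int_{\partial\Omega_\varepsilon}(tr\,A_\varepsilon)^2 v_{\bar g}$ at $\varepsilon=0$ equals $-2\,tr\,A\int_{\partial\Omega}(tr\,A^2)\,v\,v_{\bar g}$ (after using constancy of $tr\,A$, $\rho(\nu,\nu)$, closedness of $\partial\Omega$), which is a priori nonzero and is precisely proportional to the quantity you are trying to control. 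Worse, the variation of $\int|A_\varepsilon|^2 v_{\bar g}$ genuinely contains a $tr(A^3)$ contribution (from the $\langle A,\dot A\rangle$ term, since $\dot A$ has a $-vA^2$ part), and nothing in the constant-Ricci hypothesis kills it. So $\dot a_3 = 0$ gives a single linear relation mixing $\int(tr\,A^2)v$ and $\int(tr\,A^3)v$, from which you cannot isolate $tr(A^2)$. You flag this concern in your last paragraph, but it is not a matter of laborious bookkeeping: the $tr(A^3)$ term is an obstruction, not a nuisance.

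The paper avoids this by working one coefficient lower. With constant Ricci, $a_2 = \frac{1}{6}\{scal_g\,vol(\Omega)+2\int_{\partial\Omega}(tr\,A)\,v_{\bar g}\}$, and Reilly's formula gives
$$\frac{d}{d\varepsilon}\int_{\partial\Omega_\varepsilon}(tr\,A_\varepsilon)\,v_{\bar g}\Big|_{\varepsilon=0}
= \int_{\partial\Omega}\bigl(\Delta_{\bar g}v-\rho(\nu,\nu)v-(tr\,A^2)v\bigr)v_{\bar g}
+\tfrac12\int_{\partial\Omega}tr\,A\,(div_{\bar g}V^T+v\,tr\,A)v_{\bar g}.$$
No $tr(A^3)$ appears. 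After dropping all terms that integrate to zero under the hypotheses, $\dot a_2 = 0$ reduces cleanly to $\int_{\partial\Omega}(tr\,A^2)\,v\,v_{\bar g}=0$ for all $v\in\mathcal A_0(\partial\Omega)$, whence $tr(A^2)$ is constant. \emph{Only then} does one return to $a_3$: with $tr\,A$, $tr(A^2)$, $\rho(\nu,\nu)$ already constant, the variation of $\int(tr\,A_\varepsilon)^2$ drops out and, assuming constant sectional curvature, the residue of $\dot a_3$ is $c\int_{\partial\Omega}(tr\,A^3)\,v\,v_{\bar g}$, giving the last claim. Your plan to push to ``the next M--P coefficient'' for $tr(A^3)$ is unnecessary and substantially harder, as $a_4$ and $a_5$ bring in many more curvature invariants and second-fundamental-form contractions. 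You should insert the $a_2$ step before touching $a_3$; that fixes both the gap in the $tr(A^2)$ argument and the inefficiency in the $tr(A^3)$ argument.
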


\begin{proof}
Let $\Omega_{\varepsilon}=f_{\varepsilon}(\Omega)$ be a volume-preserving variation of $\Omega$ and let us denote for any
$\varepsilon$ by $(a_{i,\varepsilon})_{i \geq 0}$ the coefficients
of the asymptotic expansions of $Y_{\varepsilon}(t)$. Since $ {d
\over d\varepsilon}Y_{\varepsilon}(t) \big|_{\varepsilon=0}=0$, we
have for any $i \geq 0$, $ {d \over d\varepsilon }
a_{i,\varepsilon} \big|_{\varepsilon =0}=0$ (see for instance
\cite{GS} for an analytic justification for this last assertion).
In particular, ${d \over d\varepsilon}
vol(\partial\Omega_{\varepsilon})\big|_{\varepsilon=0}=0$ for any
volume-preserving variation of $\Omega$. This property is known to
be equivalent to the fact that the mean curvature of $\partial
\Omega$ is constant (see for instance \cite{R}).

Now, let us suppose that the Ricci curvature of $(M,g)$ is
constant in a neighborhood of $\Omega$, then for any small
$\varepsilon$, we have:
$$ \begin{aligned}
a_{2,\varepsilon} &= {1 \over 6} \left\{ scal_{g} vol(\Omega_{\varepsilon})
+2 \int _{\partial \Omega_{\varepsilon}} (tr A_{\varepsilon}) \, v_{\bar g} \right \}\\
 &={1 \over 6} \left\{ scal_{g} vol(\Omega) +2 \int _{\partial \Omega_{\varepsilon}}
 (tr A_{\varepsilon}) \, v_{\bar g}. \right \}
\end{aligned}$$
Hence, we have (see for instance \cite{R}):
$$ \begin {aligned}
{d \over d\varepsilon} \int _{\partial \Omega_{\varepsilon}} (tr A_{\varepsilon}) \,
v_{\bar g} \big|_{\varepsilon =0}& = \int_{\partial\Omega}
\left( \Delta_{\bar g}v-\rho(\nu,\nu)v-(trA^{2})v \right) v_{\bar g}\\
 &+ \frac12 \int_{\partial\Omega}trA \left(div_{\bar g}V^{T}+v\, trA \right) v_{\bar g},\\
\end{aligned}$$
where $V= {df_{\varepsilon} \over d\varepsilon} \big|_{\varepsilon =0}= v \,
\nu + V^{T}$ on the boundary $\partial \Omega$.\\
Since $\int_{\partial\Omega}v\, v_{\bar g}=0$ and $trA$ and
$\rho(\nu,\nu)$ are constant on $\partial \Omega$, we have:
$$ {d \over d\varepsilon} a_{2,\varepsilon} \big|_{\varepsilon=0}=
{1 \over 3}\int_{\partial\Omega}(trA^{2})v\, v_{\bar g}=0.$$ It
follows that $trA^{2}$ is constant on $\partial \Omega$.

 As before, we have
$${d \over d\varepsilon} a_{3,\varepsilon} \big|_{\varepsilon=0}
={ {\sqrt \pi} \over 192} \left( -7\,
{d \over d\varepsilon}\big|_{\varepsilon=0}\int_{\partial\Omega_{\varepsilon}}
(trA_{\varepsilon})^{2} \, v_{\bar g}+
10\,{d \over
d\varepsilon}\big|_{\varepsilon=0}\int_{\partial\Omega_{\varepsilon}}trA_{\varepsilon}^{2}
\, v_{\bar g}\right)$$ but,
$$ \begin{aligned}
{d \over d\varepsilon}\int_{\partial\Omega_{\varepsilon}}(trA_{\varepsilon})^{2} \,
v_{\bar g}\big|_{\varepsilon=0} & =2 \int_{\partial\Omega}
trA\left( \Delta_{\bar g}v-\rho(\nu,\nu)v-(trA^{2})v \right) v_{\bar g}\\
 &+ \frac12\int_{\partial\Omega}(trA)^{2} \left(div_{\bar g}V^{T}+v \, trA \right) v_{\bar g}\\
 &=0
\end{aligned}$$
since $trA$, $trA^{2}$ and $\rho(\nu,\nu)$ are constants. Thus,
$$ {d \over d\varepsilon} a_{3,\varepsilon} \big|_{\varepsilon=0}
={ 10{\sqrt \pi} \over 192}\;{d \over d\varepsilon}\big|_{\varepsilon=0}
\int_{\partial\Omega_{\varepsilon}}trA_{\varepsilon}^{2} \, v_{\bar g}.$$
After some straightforward but long computations we obtain, using
the fact that the sectional curvature is constant in a
neighborhood of $\Omega$, and that $trA$ and $trA^{2}$ are
constant,
$${d \over d\varepsilon} a_{3,\varepsilon} \big|_{\varepsilon=0}
= c\,\int_{\partial\Omega}trA^{3}\,v \, v_{\bar g}=0,$$ where $c$
is a constant. This proves that $trA^{3}$ is constant.
\end{proof}

 Alexandrov's Theorem \cite{Al} shows that in the Euclidean space, the geodesic spheres
are the only embedded compact hypersurfaces of constant mean
curvature. This theorem was extended to hypersurfaces of the hyperbolic space and the standard hemisphere(see \cite{MR}). Since the boundary of a critical domain
of the trace of the heat kernel is an embedded hypersurface of
constant mean curvature, we have the

\begin{corollary}\label{balls}
Let $(M,g)$ be one of the following spaces:
\begin{itemize}
\item The Euclidean space. \item The Hyperbolic space. \item The
standard Hemisphere.
\end{itemize}
Then a domain $\Omega$ of $(M,g)$ is critical for the trace of the
Dirichlet heat kernel if and
only if $\Omega$ is a geodesic ball.
\end{corollary}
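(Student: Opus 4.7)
The plan is to prove both implications by combining the earlier structural results with the classical Alexandrov-type rigidity theorem, doing essentially no new analysis.

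For the ``only if'' direction, I would invoke Theorem \ref{hconst}: if $\Omega$ is critical for the trace of the Dirichlet heat kernel at every time $t>0$, then the boundary $\partial\Omega$ is a compact embedded hypersurface of constant mean curvature in $(M,g)$. The hypothesis that $(M,g)$ is the Euclidean space, the hyperbolic space, or the standard hemisphere is exactly the setting in which Alexandrov's theorem (for the Euclidean case, \cite{Al}) and its extensions by Montiel--Ros (for the hyperbolic space and the hemisphere, \cite{MR}) apply. These theorems force such a hypersurface to be a geodesic sphere, and since $\partial\Omega$ bounds $\Omega$, the domain $\Omega$ itself must be the corresponding geodesic ball. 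This is the substantive direction, and the main (entirely external) obstacle is simply the availability of the Alexandrov-type rigidity statement in each of the three ambient geometries; all three cases are covered by the references already cited.

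For the ``if'' direction, I would appeal to Corollary \ref{ballsym}. Each of the three model spaces is, about any of its points, isometric to a rotationally symmetric Riemannian manifold of the form $a^2(r)\,dr^2+b^2(r)\,d\sigma^2$ (with $a\equiv 1$ and $b(r)=r$, $\sinh r$, or $\sin r$ respectively, on the appropriate range of $r$), so a geodesic ball centered at any point is, after applying a suitable isometry, the type of ball considered in Corollary \ref{ballsym}. Consequently every geodesic ball is a critical domain for the trace of the Dirichlet heat kernel at every $t>0$.

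Combining the two directions yields the equivalence claimed in Corollary \ref{balls}. The only nontrivial step is invoking the appropriate Alexandrov rigidity result in each geometry; once that is granted, the conclusion is immediate from Theorem \ref{hconst} and Corollary \ref{ballsym}, with no further computation required.
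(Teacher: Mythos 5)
Your proposal is correct and follows essentially the same route as the paper: Theorem \ref{hconst} gives constant mean curvature of $\partial\Omega$, Alexandrov's theorem and the Montiel--Ros extension then identify $\partial\Omega$ as a geodesic sphere, while the converse is exactly the remark following Corollary \ref{ballsym} that geodesic balls in space forms are critical. No discrepancies to report.
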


\bigskip

\noindent \textbf{ Acknowledgment}\\
The authors would like to Thank Professors Bernard Helffer and
Peter Gilkey for valuable discussions. They also thank the referee for pointing out some mistakes in the first version of the paper and for valuable comments

\end{document}